\newtheorem{thm}{Theorem}[section]
\newtheorem{lem}[thm]{Lemma}
\newtheorem{pro}[thm]{Proposition}
\newtheorem{defi}[thm]{Definition}
\newtheorem{rem}[thm]{Remark}
\newtheorem{exa}[thm]{Example}
\newenvironment{proof}{\noindent \textbf{{Proof.}} \sf}
\def\qed{\hfill $\diamond$ \bigskip}
\newcommand{\li}[2]{{\vphantom{#2}}_{#1}#2}
\def\B{{\mathcal B}}
\def\C{{\mathcal C}}
\def\F{{\mathcal F}}
\def\H{{\mathcal H}}
\def\L{{\mathcal L}}
\def\T{{\mathcal T}}
\def\lim{\mathop{\rm lim}\nolimits}
\def\deg{\mathsf{deg}}
\begin{document}

\sf

\title{The fundamental group of a Hopf linear category\\ \vskip7mm \footnotesize Dedicated to Eduardo N. Marcos for his 60th birthday}
\author{Claude Cibils and Andrea Solotar
\thanks{\footnotesize This work has been supported by the projects  UBACYT 20020130100533BA, PIP-CONICET 112--201101--00617, PICT 2011--1510 and MATHAMSUD-GR2HOPF.
The second author is a  research member of
CONICET (Argentina).}}

\date{}

\maketitle
\begin{abstract}
 We define the fundamental group of a Hopf algebra over a field. For this purpose we first consider gradings of Hopf algebras and Galois coverings. The latter are given by linear categories with new additional structure which we call Hopf linear categories over a finite group. We compare this invariant to the fundamental group of the underlying linear category, and we compute those groups for  families of examples.
\end{abstract}

\noindent 2010 MSC: 16T05 16W50 14H30 18D10 18D20

\section{\sf Introduction}
The main purpose of this paper is to initiate the theory of the fundamental group of a Hopf algebra over a field, relative to a finite group. In this context there is  no analogous of homotopy theory of loops available, so the situation differs from the algebraic topology setting. Instead we use the approach developed in recent years for considering an intrinsic fundamental group "\`{a} la Grothendieck" of an associative algebra over a field or more generally of a small category over a field, see  \cite{CRS JNG, CRS ANT 10, CRS ART 12}. As mentioned in \cite{CRS JNG}, we  follow methods closely related to the way in which the fundamental group is considered in algebraic geometry, after A. Grothendieck and C. Chevalley, see for instance \cite{dodo}.

Gradings by different groups of a given Hopf algebra constitute the main tool, they approximate the fundamental group that we define in this paper. More precisely, in order to retain only useful information, we  consider connected gradings, namely gradings where the degrees of the homogeneous components generate the grading group. They correspond to connected Galois coverings, see below.

 A grading of a Hopf algebra $H$ is a standard notion, see for instance  \cite{mon}.  The specific point is that the group which grades is required to be abelian in order to insure that it also grades $H\otimes H$ so that the comultiplication $\Delta : H\longrightarrow H\otimes H$ is homogeneous. This  is a main difference from the associative algebra context, where the groups grading an algebra are arbitrary. In turn, this implies that the fundamental group that we introduce is abelian.

In case the abelian group $\Gamma$ grading a Hopf algebra is finite, we provide a Galois covering using a smash product category with Galois group $\Gamma$ -- see also E. Beneish and W. Chin \cite{chin}. The resulting linear category   has an additional structure that we consider in Section \ref{hopf categories}. More precisely, we define a Hopf $k$-category over a finite group $G$ to be a small $k$-category whose objects are the elements of $G$, equipped with a comultiplication, a counit and an antipode. In particular a Hopf $k$-category over a trivial group is a Hopf $k$-algebra. The smash product of a Hopf $k$-category $\H$ over a finite group $G$ with respect to a finite abelian group $\Gamma$ is a Hopf $k$-category over $G\times \Gamma$.

V.G. Turaev in \cite{tura,tura2} (see also A. Virelizier in \cite{vire}) has considered Hopf $G$-coalgebras where $G$ is a discrete group. We will recall the definition in order to  prove the following. Assuming $G$ is finite, the Hopf $G$-coalgebras are precisely Hopf $k$-categories over $G$ with zero space of morphisms between different objects, see Proposition \ref{turaev}.

The fundamental group of $\H$ is obtained by considering all  Hopf connected gradings by finite abelian groups -- which provide Galois coverings -- and morphisms between them. An element of the fundamental group is a family of elements belonging to the groups which are grading $\H$ in a connected way, and which is compatible through morphisms of gradings. Consequently this group is abelian. This property can be compared with the fact that the usual fundamental group of an $H$-space is abelian. Recall that an $H$-space is a topological space with additional structure, namely an associative continuous product with a neutral element, see for instance \cite{hatcher}. Note that a Hopf $k$-category is a $k$-category with additional structure as well, namely a comultiplication, a counit and an antipode.

Gradings of Lie algebras have been considered in recent years, see for instance \cite{baza,eldu,koch,pareza}. In particular gradings on finite dimensional semisimple Lie algebras by cyclic groups are useful for Kac-Moody algebras, see \cite{kac}. It is clear that a grading of a Lie algebra provides a Hopf grading of its enveloping algebra. Hence a theory of the fundamental group of a Lie algebra could be considered, together with a relation to the fundamental group that we introduce in this paper for Hopf algebras.

 We observe that the fundamental group of the trivial Hopf $k$-category over any finite group is trivial,  and that it is an invariant of the isomorphism class of a Hopf $k$-category. Moreover if a universal grading of a Hopf $k$-category exists,  its  grading  group is isomorphic to the fundamental group. In addition we prove the existence of a group homomorphism $\tau$ from the fundamental group of the underlying $k$-category to the fundamental group of the Hopf $k$-category. We show that this morphism is surjective if the Hopf $k$-category admits both a universal and a Hopf universal grading.

 For specific $k$-categories with a connected grading, there exists an injective Hurewicz morphism from the linear abelianisation of the grading group to the first Hochschild cohomology group of the $k$-category, see  \cite{CRS DOC 11}. The theory introduced in this paper could have a relation with the first cohomology of a Hopf algebra as considered in \cite{gesc,ta}.

In the last sections we compute the fundamental group of Taft categories, and of the Hopf algebras $k^{C_n}$ where $C_n$ is a cyclic group, for $n\leq 7$. It is interesting to note that  the fundamental group of a Taft category is a finer invariant than the fundamental group of its underlying $k$-category. Indeed, it takes into account the number of objects while the latter is the infinite cyclic group regardless of the number of objects. For Taft categories $\tau$ is injective.

\section{\sf Hopf linear categories}\label{hopf categories}

Let $k$ be a field.
A \emph{$k$-category} $\B$ is a small category with set of objects $\B_0$ which is enriched over $k$-vector spaces.
In other words, for $x,y \in \B_0$ the set of morphisms ${}_y\B_x$ from $x$ to $y$ is a
$k$-vector space and composition of morphisms is $k$-bilinear.  The endomorphism spaces are $k$-algebras and each morphism space  ${}_y\B_x$ is a  ${}_y\B_y -
{}_x\B_x$-bimodule.

\begin{defi}
Let $\Lambda$ be a $k$-algebra and let $E$ be a finite set of orthogonal idempotents of $\Lambda$ which is complete, i.e. $\sum_{e\in E}e=1$. The \emph{Peirce category} of $\Lambda$ with respect to $E$ is the $k$-category $\B_{\Lambda,E}$ whose set of objects is $E$ and ${}_y(\B_{\Lambda,E})_x= y \Lambda x$ for all $x,y\in E$.
Composition is inferred from the product in $\Lambda$.
\end{defi}

The reverse construction for a $k$-category $\B$ with a finite number of objects is  the \emph{sum-algebra} $\oplus\B$  which is the direct sum of all vector spaces of morphisms,  provided with the product inferred from the matrix product combined with composition in $\B$.  It provides a $k$-algebra equipped with a finite complete set of orthogonal idempotents, given by the identities of the objects.

Observe that this way $k$-categories with finite number of objects are identified with $k$-algebras equipped  with a finite complete set of orthogonal idempotents.

\begin{defi}\label{Hopf k category}
Let $G$ be a finite group. A \emph{Hopf $k$-category} over $G$ is a $k$-category $\H$ whose objects are the elements of $G$, provided with the following additional data for each pair of objects $x,y\in G$.
\begin{enumerate}
\item A $k$-linear comultiplication map
\[ {}_y\Delta_x: {}_y\H_x \longrightarrow \bigoplus_{\substack{ x''x'=x \\ y''y'=y }} \li{y''}{\H}_{x''}\otimes \li{y'}{\H}_{x'}.\]
In the following we make use of the following convention: if two morphisms $g$ and $f$ cannot be composed because the target object $\tau(f)$ of $f$ is different from the source object $\sigma(g)$ of $g$, then $gf=0$.\\
The comultiplication verifies that whenever $g\in {}_z\H_{y}$ and $f \in {}_y\H_{x}$ are composable,
$${}_z\Delta_x(gf)= {}_z\Delta_y(g){}_y\Delta_x(f).$$
 Moreover
\[ {}_x\Delta_x({}_x1_x)= \sum_{\substack{x''x'=x}}\li{x''}{1_{x''}}\otimes {}_{x'}1_{x'}. \]

\item A counit
\[ {}_y\epsilon_x: {}_y\H_x \to k, \]
which is a $k$-functor from $\H$ to the single object category with endomorphism algebra reduced to $k$.
\item An antipode $S$ which is a contravariant functor of $\H$ such that $S(x)=x^{-1}.$
\end{enumerate}
The previous maps verify the conditions below, analogous  to those defining  a Hopf $k$-algebra. We state them using using Sweedler's notation
\[ \Delta\left( {}_yf_x\right)= \sum_{\substack{ x''x'=x\\ y''y'=y }}\li{y''}{f''}_{x''}\otimes \li{y'}{f'}_{x'}.\]
\begin{enumerate}
\item Coassociativity: given objects $x$ and $y$, both maps from ${}_y\H_{x}$ to
\[ \bigoplus_{\substack{x'''x''x'=x\\ y'''y''y'=y} }{}_{y'''}\H_{x'''}\otimes {}_{y''}\H_{x''}\otimes {}_{y'}\H_{x'} \]
induced by the comultiplication coincide.

\item Counit: if $x$ and $y$ are objects such that at least one of them is different from $1_G,$ then ${}_y\epsilon_x=0$, and given  ${}_yf_x \in {}_y\H_x$,
$$ {}_1\epsilon_1(\li{1}{{f''}_1})\ \  \li{y}{{f'}_x}= {}_yf_x \mbox{ and } \li{y}{{f''}_x}\ \ {}_1\epsilon_1(\li{1}{{f'}_1})= {}_yf_x.$$

\item Antipode: for ${}_yf_x \in {}_y\H_x$,
\[ \sum_{\substack{x''x'=x\\ y''y'=y}}S(\li{y''}{f''_{x''}})\ \li{y'}{f'_{x'}}={}_y\epsilon_x({}_yf_x) \mbox{ and } \sum_{\substack{x''x'=x\\ y''y'=y}}\li{y''}{f''_{x''}}\ S(\li{y'}{f'_{x'}})={}_y\epsilon_x({}_yf_x). \]
\end{enumerate}
\end{defi}

\begin{exa}\label{trivial}
The trivial  Hopf $k$-category over a finite group $G$ is denoted $\mathcal{T}_G$, it has $G$ as set of objects, zero morphism space between different objects while  ${\li{x}{\left(\mathcal{T}_G\right)}}_x =k$ for each object $x$. We set
\begin{equation}
{}_x\Delta_x({}_x1_x)= \sum_{\substack{x''x'=x}}\li{x''}{1_{x''}}\otimes {}_{x'}1_{x'},
\end{equation}
while if $x\neq $1 we put $\epsilon\left( {}_x1_x\right)=0$, and $\epsilon\left({}_11_1\right)=1.$ The antipode is given by $S\left({}_x1_x\right)=\li{x^{-1}}{1}_{x^{-1}}.$
\end{exa}

{For a discrete group $G$ we recall now briefly from \cite{vire} the definition of a  \textit{Hopf $G$-coalgebra} $\mathbf H$, that is a family $\mathbf H=\{ \mathbf H_s\}_{s\in G} $ of $k$-algebras endowed with a family of comultiplications $\mathbf\Delta=\{\mathbf\Delta_{s,t}:\mathbf H_{st}\to \mathbf H_s\otimes  \mathbf H_t\}_{s,t \in G},$ a counit $\epsilon :  \mathbf H_1\to k,$ and an antipode $S=\{S_s:\mathbf H_s\to \mathbf H_{s^{-1}}\}_{s\in G}$ verifying properties and compatibility conditions which are adapted from those defining a Hopf $k$-algebra.

\begin{pro}\label{turaev}
Let $G$ be a finite group. There is a one-to-one correspondence between Hopf $G$-coalgebras and Hopf $k$-categories over $G$ which have zero morphisms between different objects.
\end{pro}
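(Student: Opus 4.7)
The plan is to set up a bijection by reading off the structure maps on both sides and then matching axioms. Given a Hopf $k$-category $\H$ over $G$ with ${}_y\H_x=0$ for $x\neq y$, the only non-trivial morphism spaces are the endomorphism algebras $\mathbf{H}_s:={}_s\H_s$. Under this hypothesis every non-zero summand ${}_{y''}f''_{x''}\otimes{}_{y'}f'_{x'}$ in the codomain of the comultiplication ${}_y\Delta_x$ must satisfy $x''=y''$ and $x'=y'$; in particular ${}_y\Delta_x=0$ for $x\neq y$, and each ${}_x\Delta_x$ breaks up, one component per factorisation $st=x$, into a map $\Delta_{s,t}:\mathbf{H}_{st}\to\mathbf{H}_s\otimes\mathbf{H}_t$. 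The counit is concentrated on $\mathbf{H}_1$ as $\epsilon:={}_1\epsilon_1$ by the very definition of counit in Definition \ref{Hopf k category}. Contravariant functoriality of $S$ together with $S(x)=x^{-1}$ restricts to anti-algebra morphisms $S_s:\mathbf{H}_s\to\mathbf{H}_{s^{-1}}$. These data assemble the Hopf $G$-coalgebra associated to $\H$.

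Conversely, from a Hopf $G$-coalgebra $(\mathbf{H}_s,\Delta_{s,t},\epsilon,S_s)$ I would build a $k$-category with object set $G$, endomorphism algebras ${}_s\H_s=\mathbf{H}_s$, and zero off-diagonal morphism spaces; package the various $\Delta_{s,t}$ into ${}_x\Delta_x$ valued in the direct sum over factorisations of $x$; extend $\epsilon$ by zero on the remaining endomorphism algebras; and let $S$ flip objects by $s\mapsto s^{-1}$ and act on morphisms by the $S_s$. The two constructions are evidently mutually inverse, so the content of the proposition is the axiom correspondence.

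Under this dictionary the axiom-by-axiom check is direct. Since composable morphisms in $\H$ necessarily lie inside a single endomorphism algebra ${}_s\H_s$, multiplicativity of ${}_z\Delta_x$ with respect to composition amounts exactly to each $\Delta_{s,t}$ being an algebra map; the image-of-identity formula becomes $\Delta_{s,t}(1_{\mathbf{H}_{st}})=1_{\mathbf{H}_s}\otimes 1_{\mathbf{H}_t}$; coassociativity splits along each triple factorisation $stu=x$ into the Turaev coassociativity identity; and the counit axiom transports term by term. The main subtlety, and essentially the only non-bookkeeping step, is the antipode axiom: in the sum $\sum S({}_{y''}f''_{x''})\cdot{}_{y'}f'_{x'}$ the convention that non-composable products vanish kills every contribution in which $x'\neq(x'')^{-1}$ or $y'\neq(y'')^{-1}$, and therefore forces $x=y=1_G$. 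So the equation is automatic off the diagonal and away from $1_G$, while for $f\in\mathbf{H}_1$ it reduces component-wise in $x'\in G$ precisely to the Hopf $G$-coalgebra antipode identity on $\mathbf{H}_1$ (and symmetrically for the second antipode equation). This completes the correspondence.
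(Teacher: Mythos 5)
Your proposal is correct and follows essentially the same route as the paper: identify the diagonal endomorphism algebras, observe that the vanishing of off-diagonal morphism spaces collapses each ${}_x\Delta_x$ into the family of components $\Delta_{s,t}$ (and conversely reassemble them as a direct sum), and note the two constructions are mutually inverse. The only difference is that you spell out the axiom checks — notably the antipode identity being forced onto $\mathbf{H}_1$ by the non-composability convention — which the paper dispatches with ``similarly,'' so this is a matter of detail rather than of method.
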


\begin{proof}
Let $\mathbf H$ be a Hopf $G$-coalgebra. The associated $k$-category $\H$ has set of objects $G$, the algebra of endomorphisms of an object $s$ is the $k$-algebra $\mathbf H_s$ while the morphisms between different objects are reduced to $0$. The composition in the category is given at each object by the product of the $k$-algebra $\mathbf H_s$. In order to define the comultiplications in $\H$, observe that we just need a map
$${}_s\Delta_s : {}_s\H_s = \mathbf H_s \longrightarrow \bigoplus_{s''s'=s} {}_{s''}\H_{s''}\otimes {}_{s'}\H_{s'}=\bigoplus_{s''s'=s} \mathbf H_{s''}\otimes \mathbf H_{s'}$$
since there are no non zero morphisms between different objects in $\H$.
Consider   $${}_s\Delta_s=\bigoplus_{s''s'=s}\mathbf\Delta_{s'',s'}$$
which is clearly coassociative in the sense of Definition \ref{Hopf k category}. Similarly we define a counit and an antipode out of the data contained in $\mathbf H.$

Reciprocally, consider a Hopf $k$-category $\H$ over a group $G$ which has zero morphisms between different objects. Let $\mathbf H$ be the family of $k$-algebras  $\{\mathbf H_s={}_s\H_s\}_{s\in G}.$ The comultiplication of $\H$ provides maps
\[ {}_s\Delta_s: {}_s\H_s \longrightarrow \bigoplus_{\substack{ s''s'=s \\ s''''s'''=s }} \li{s''''}{\H}_{s''}\otimes \li{s'''}{\H}_{s'}\]
which reduce to
\[ {}_s\Delta_s:\mathbf H_s=  {}_s\H_s \longrightarrow \bigoplus_{\substack{ s''s'=s}} \li{s''}{\H}_{s''}\otimes \li{s'}{\H}_{s'}= \bigoplus_{\substack{ s''s'=s}} {\mathbf H}_{s''}\otimes {\mathbf H}_{s'}.\]
For each pair of elements $(s'', s')$ we project on the corresponding direct summand and the resulting map is settled as
$$\mathbf \Delta_{s'',s'} : \mathbf H_{s''s'} ={}_{s''s'}\H_{s''s'}\longrightarrow  {\mathbf H}_{s''}\otimes {\mathbf H}_{s'}$$
which verifies the coassociativity condition. Similarly, we define the counits and the antipodes obtaining thus a Hopf $G$-coalgebra. Clearly, both constructions are inverse to each other.
\qed
\end{proof}

\begin{rem}
\label{dualofagroup}
Recall that for a finite group $G$, the commutative associative algebra $k^G$ of $k$-valued maps over $G$ is a Hopf algebra which is the dual of the group algebra $kG$. For any $a:G\to k$, we have $\Delta a(s_1,s_2)=a(s_1s_2)$, while $S(a)(s)=a(s^{-1})$ and $\epsilon(a) = a(1).$

The Dirac mass $\delta_s$ on an element $s\in G$ is the zero map on each element of $G$ except on $s$ where its value is $1$. The set of Dirac masses is the complete system of primitive orthogonal central idempotents of $k^G$. In fact $\Delta \delta_s =\sum_{s''s'=s}\delta_{s''}\otimes \delta_{s'}.$ Moreover $S(\delta_s)=\delta_{s^{-1}}$. For $s\neq 1$ we have $\epsilon(\delta_s)=0$ while $\epsilon(\delta_1)=1.$
\end{rem}

\begin{lem}
Let $\H$ be a Hopf $k$-category over a finite group $G$. The sum-algebra $\oplus\H$ is a Hopf algebra which contains canonically the Hopf algebra $k^G$.
\end{lem}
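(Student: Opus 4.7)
The plan is to assemble the global Hopf structure on $\oplus\H$ from the pointwise data of Definition \ref{Hopf k category}, and then to identify $k^G$ with the subalgebra spanned by the identities ${}_s1_s$. Concretely I would define $\Delta:\oplus\H\to\oplus\H\otimes\oplus\H$ by $\Delta({}_yf_x)={}_y\Delta_x({}_yf_x)$ on each homogeneous summand and extend linearly, and similarly assemble $\epsilon$ and $S$ from the pointwise counits and antipodes. The target of ${}_y\Delta_x$ sits naturally inside the ambient $\oplus\H\otimes\oplus\H=\bigoplus_{(x'',y''),(x',y')}{}_{y''}\H_{x''}\otimes{}_{y'}\H_{x'}$, so these assignments are well defined.

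The main obstacle is multiplicativity of $\Delta$. For composable $g\in{}_z\H_y$, $f\in{}_y\H_x$ this is exactly the compatibility axiom between comultiplication and composition given in Definition \ref{Hopf k category}. The delicate case is a non-composable pair $g\in{}_z\H_y$, $f\in{}_{y_0}\H_x$ with $y\neq y_0$: here $\Delta(gf)=0$ by our convention, and I would show $\Delta(g)\Delta(f)=0$ as well by expanding both factors in Sweedler-style sums indexed by factorisations $y''y'=y$ and $y''_1y'_1=y_0$, noting that each non-zero summand requires the left tensor factors and the right tensor factors to be independently composable in $\H$, which forces $y''=y''_1$ and $y'=y'_1$; cancellation in the group $G$ then yields $y=y_0$, a contradiction. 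That $\Delta$ preserves the unit $\sum_{x\in G}{}_x1_x$ follows by summing over $x$ the identity given for ${}_x\Delta_x({}_x1_x)$ in Definition \ref{Hopf k category}.

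Coassociativity, the counit and antipode axioms on $\oplus\H$ reduce to their pointwise versions; for the counit one uses crucially that ${}_y\epsilon_x=0$ whenever $(x,y)\neq(1_G,1_G)$ in order to recover the usual global axiom from its homogeneous pieces. For the canonical embedding of $k^G$, I would send the Dirac mass $\delta_s$ to ${}_s1_s$: this is an injective $k$-algebra homomorphism because the ${}_s1_s$ form a complete system of orthogonal idempotents of $\oplus\H$ matching the primitive idempotents $\delta_s$ of $k^G$. Comparing with Remark \ref{dualofagroup}, the identities $\Delta({}_s1_s)=\sum_{s''s'=s}{}_{s''}1_{s''}\otimes{}_{s'}1_{s'}$, ${}_s\epsilon_s({}_s1_s)=1$ if $s=1_G$ and $0$ otherwise, and $S({}_s1_s)={}_{s^{-1}}1_{s^{-1}}$ (using that $S$ is a contravariant functor with $S(s)=s^{-1}$ sending identities to identities) show that the embedding preserves $\Delta$, $\epsilon$ and $S$, hence is a Hopf algebra embedding.
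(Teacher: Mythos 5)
Your proposal is correct and follows the same route as the paper: the Hopf structure on $\oplus\H$ is assembled as the direct sum of the pointwise comultiplications, counits and antipodes, and the embedding of $k^G$ is given by $\delta_s\mapsto{}_s1_s$. The paper states this in two sentences without verification, whereas you supply the checks it leaves implicit (in particular multiplicativity of $\Delta$ on non-composable pairs via cancellation in $G$, and the compatibility of the embedding with $\Delta$, $\epsilon$ and $S$), all of which are sound.
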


\begin{proof}
The direct sum of the comultiplications of $\H$ on  the vector spaces of morphisms provides an algebra map $$\oplus\H\to\left(\oplus\H\right) \otimes\left(\oplus \H\right)$$ which is coassociative. Similarly the antipodes of $\H$ provides an antipode of $\oplus\H$ and the direct sum of the counits gives a counit $\epsilon :\oplus\H\to k$.

Consider the linear map which sends $\delta_s$ to the element of $\oplus\H$ given by the identity endomorphism of the object $s$ completed by zeros in the other components of the direct sum. This is an injective map of Hopf algebras.\qed
\end{proof}}

\begin{thm}\label{onetoone}
Let $G$ be a finite group and let $k$ be a field. There is a one-to-one correspondence between Hopf $k$-categories over $G$ and the embeddings of $k^G$ in Hopf $k$-algebras.
\end{thm}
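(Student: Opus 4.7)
The plan is to use the previous lemma for the forward direction and build the inverse via Peirce decomposition. Given a Hopf $k$-category $\H$ over $G$, the lemma assigns the canonical embedding $k^G \hookrightarrow \oplus \H$ sending $\delta_s$ to the identity endomorphism $\li{s}{1}_s$. For the reverse direction, given an embedding $\iota : k^G \hookrightarrow H$ of Hopf $k$-algebras, I would first observe that the images $e_s = \iota(\delta_s)$ form a finite complete set of orthogonal idempotents in $H$: orthogonality and idempotency follow from $\iota$ being an algebra map applied to the analogous identities in $k^G$, while $\sum_s e_s = \iota(1_{k^G}) = 1_H$. Hence one can form the Peirce $k$-category $\H := \B_{H, \{e_s\}_{s\in G}}$ whose objects are $G$ and with $\li{y}{\H}_x = e_y H e_x$.

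The Hopf structure on $H$ then restricts cleanly to $\H$. Because $\iota$ is a coalgebra map, $\Delta(e_s) = \sum_{s''s' = s} e_{s''} \otimes e_{s'}$, so for $\li{y}{f}_x \in \li{y}{\H}_x$,
\[ \Delta(\li{y}{f}_x) = \Delta(e_y)\Delta(f)\Delta(e_x) = \sum_{\substack{y''y' = y \\ x''x' = x}} (e_{y''} \otimes e_{y'})\,\Delta(f)\,(e_{x''} \otimes e_{x'}), \]
which lies in $\bigoplus_{y''y'=y,\ x''x'=x} \li{y''}{\H}_{x''} \otimes \li{y'}{\H}_{x'}$, defining $\li{y}{\Delta}_x$. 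The counit of $H$ restricts to $\li{y}{\epsilon}_x$, and from $\epsilon(e_s) = \epsilon(\delta_s) = \delta_{s,1}$ together with $\epsilon$ being an algebra map it follows that $\li{y}{\epsilon}_x = 0$ unless $x = y = 1$. The antipode $S$ of $H$ is an anti-algebra map satisfying $S(e_s) = S(\iota(\delta_s)) = \iota(\delta_{s^{-1}}) = e_{s^{-1}}$, so $S(\li{y}{\H}_x) = S(e_x)H\, S(e_y) \subseteq \li{x^{-1}}{\H}_{y^{-1}}$, giving an antipode in the categorical sense. Coassociativity, the counit identities, and the antipode axioms of Definition~\ref{Hopf k category} are then obtained simply by projecting the corresponding identities in $H$ onto the relevant Peirce components.

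It then remains to check the two constructions are mutually inverse. Starting from $\iota : k^G \hookrightarrow H$, the identity $\sum_s e_s = 1_H$ gives the Peirce decomposition $H = \bigoplus_{x,y} e_y H e_x$ as a vector space, and the product in $H$ corresponds under this decomposition precisely to the sum-algebra product on $\oplus \H$; since $\Delta$, $\epsilon$, $S$ were defined by restriction, this is an isomorphism of Hopf algebras under which $\iota$ becomes the canonical embedding of the previous lemma. Conversely, starting from a Hopf $k$-category $\H$, forming $\oplus \H$ and then taking the Peirce category with respect to the idempotents $\li{s}{1}_s$ recovers $\H$ tautologically, because $(\li{y}{1}_y)(\oplus \H)(\li{x}{1}_x) = \li{y}{\H}_x$ and all structure maps are obtained by direct sum followed by projection.

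The main subtlety I anticipate is bookkeeping: one must check that each of the three Hopf-category axioms (coassociativity, counit, antipode) is equivalent to its Hopf-algebra counterpart after projection onto Peirce components, and that the tensor product decomposition $\bigoplus_{y''y'=y,\, x''x'=x} \li{y''}{\H}_{x''} \otimes \li{y'}{\H}_{x'}$ is really a direct summand of $(\oplus \H)\otimes (\oplus \H)$ singled out by the idempotents $e_{y''}\otimes e_{y'}$ and $e_{x''}\otimes e_{x'}$. Once this is set up, everything else is a direct translation; no additional hypothesis (such as semisimplicity or finiteness of $H$) is required because $k^G$ already carries a complete system of idempotents.
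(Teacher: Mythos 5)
Your proposal is correct and follows essentially the same route as the paper: the forward direction invokes the preceding lemma on the sum-algebra $\oplus\H$ with its canonical copy of $k^G$, and the reverse direction forms the Peirce category on the complete orthogonal idempotents $\{\delta_x\}_{x\in G}$ and restricts $\Delta$, $\epsilon$ and $S$ to the Peirce components. You simply spell out more of the bookkeeping (e.g.\ that $\Delta(e_s)=\sum_{s''s'=s}e_{s''}\otimes e_{s'}$ forces the comultiplication into the right summands, and that the two constructions are mutually inverse) than the paper does.
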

\noindent\textbf{Proof. }\sf
Let $\H$ be a Hopf $k$-{category} over $G$ and
let $\oplus \H$ be the corresponding sum-algebra. In the preceding Lemma we have proved that
 $\H$ provides a Hopf algebra structure on $\oplus \H$  with a canonical inclusion of $k^G$ in $\oplus\H$.

Reciprocally, consider an embedding of Hopf $k$-algebras $k^G\subset H$.The set $E=\{\delta_x\}_{x\in G}$ is a complete finite system of orthogonal idempotents of $H$.

Let $\B_{H,E}$ be the corresponding Peirce category, whose objects are in one-to-one correspondence with elements of $G$.
The comultiplication $\Delta$ of $H$ restricted to $\delta_yH\delta_x={\li{y}{\left(\B_{H,E}\right)}_x}$
provides ${}_y\Delta_x$. The family $({}_y\Delta_x)_{x,y\in G}$ verifies the requested properties.
Similarly, the counit $\epsilon:H\to k$ restricts to
${}_y\epsilon_x: \delta_yH\delta_x \to k$ and the antipode $S$ provides
${}_yS_x: \delta_yH\delta_x \to \delta_{x^{-1}}H\delta_{y^{-1}}$.\qed

\begin{rem} Let $G$ be a finite group. In the correspondence of the Theorem above, the identity embedding of $k^G$ into itself corresponds to  the trivial Hopf $k$-category ${\mathcal T}_G$ over $G$, see Example \ref{trivial}.
\end{rem}
\section{\sf Gradings and the smash product}\label{gradings}

We first recall the definition of a connected grading of a $k$-category $\B$ and the corresponding smash product, see for instance \cite{CM}.

\begin{defi} \label{gradings}\sf   A \emph{grading} $X$ of  $\B$ by a group $\Gamma_X$ is a  direct sum decomposition of each morphism space
\[ {}_{y}\B_x = \bigoplus_{s\in \Gamma_X}X^s{}_{y} \B_x\]
compatible with composition: for $x,y,z\in \B_0$ and $s,t\in \Gamma_X$,
\[\left(X^t{}_{z}\B_{y}\right)\left(X^s{}_{y}\B_x\right) \subset X^{ts}{}_{z}\B_x.\]
In case $X^s{}_{y}\B_x$ is non zero, this vector space is called the \em{homogeneous component} of degree $s$ from $x$ to $y$.

The group $\Gamma_X$ is called the \emph{grading group} of $X$.
\end{defi}

A \emph{virtual morphism} is a pair $(f,\epsilon)$ where $f$ is a morphism and $\epsilon = 1\mbox{ or } -1$. Source and target objects of virtual morphisms remain unchanged if $\epsilon=1$ and are reversed if $\epsilon=-1$. A \emph{walk} from $x$ to $y$ is a sequence
$w=(f_n,\epsilon_n) \dots (f_1,\epsilon_1)$  where the morphisms $f_i$ are non zero,
$\epsilon_i=\pm 1$ and
$\tau(f_i,\epsilon_i)=\sigma(f_{i+1},\epsilon_{i+1})$ for $i=1,\dots, n-1$, with  $\sigma(f_1,\epsilon_1)=x$ and $\tau(f_n,\epsilon_n)=y$.

A \emph{$X$-homogeneous walk} $w$ from $x$ to $y$ in $\B$ is a sequence as above, where each
$f_i$ is $X$-homogeneous, of $X$-degree denoted $\mathsf{deg}_Xf_i$.
The $X$-degree of $w$ is defined as follows:
\[ \mathsf{deg}_Xw= (\mathsf{deg}f_n)^{\epsilon_n}\dots (\mathsf{deg}f_1)^{\epsilon_1}. \]
The set of $X$-homogeneous walks from $x$ to $y$ is denoted ${}_yHW_X(\B)_x$.

We assume that $\B$ is connected, that is from any object $x$ in $\B$ we can reach any other object $y$ by a non zero walk.
The grading $X$ of $\B$ is  \emph{connected} if given any two objects $x,y$ in $\B$,
the degree map
\[ \mathsf{deg}_X: {}_y HW_X(\B)_x\to \Gamma_X \]
is surjective.

The following result is straightforward.
\begin{lem}\label{support}
Let $\B$ be a single object $k$-category graded by a group $\Gamma$,  equivalently let $B$ be a graded $k$-algebra. The grading is connected if and only if its support (that is the set of  degrees of the homogeneous components of $B$) generate  $\Gamma$.
\end{lem}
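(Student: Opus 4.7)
The plan is to unwind both conditions directly. Because $\B$ has a single object, there is no obstruction to concatenating virtual morphisms: any two non-zero morphisms are composable in either direction, so the set of homogeneous walks from the unique object to itself is in natural bijection with the free monoid on the symbols $(f,\epsilon)$, where $f$ ranges over non-zero homogeneous elements of $B$ and $\epsilon = \pm 1$. Under the degree map, this monoid surjects onto the subgroup of $\Gamma$ generated by $S$, where $S$ denotes the support.

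First I would prove the easy direction: assume the grading is connected, so that $\mathsf{deg}_X\colon HW_X(\B) \to \Gamma$ is surjective. Every $g \in \Gamma$ is then of the form $(\mathsf{deg} f_n)^{\epsilon_n}\cdots (\mathsf{deg} f_1)^{\epsilon_1}$ with each $\mathsf{deg} f_i \in S$, so $g$ lies in the subgroup $\langle S\rangle$ generated by $S$. Hence $\langle S\rangle = \Gamma$.

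For the converse, assume $S$ generates $\Gamma$. Given $g \in \Gamma$, write $g = s_n^{\epsilon_n}\cdots s_1^{\epsilon_1}$ for some $s_i \in S$ and $\epsilon_i \in \{\pm 1\}$. By definition of the support, for each $i$ we may choose a non-zero homogeneous morphism $f_i$ with $\mathsf{deg} f_i = s_i$. Since $\B$ has only one object, the sequence $w = (f_n,\epsilon_n)\cdots (f_1,\epsilon_1)$ is a well-defined homogeneous walk, and its $X$-degree is $g$ by construction. Thus $\mathsf{deg}_X$ is surjective and the grading is connected.

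No step presents a real obstacle; the only thing to be careful about is the ordering of factors and the treatment of the exponents $\epsilon_i$, which follow the convention fixed in the definition of $\mathsf{deg}_X w$. The single-object hypothesis is exactly what allows us to form a walk out of an arbitrary sequence of homogeneous morphisms without worrying about source/target matching.
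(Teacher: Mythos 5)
Your proof is correct and is precisely the straightforward unwinding the paper has in mind (the authors omit the proof entirely, labelling the result ``straightforward''): with one object every sequence of non-zero homogeneous morphisms is a walk, so the image of $\mathsf{deg}_X$ is exactly the subgroup generated by the support. The only point worth a half-sentence is the identity element of $\Gamma$, which is reached by the walk $(1_B,1)$ since the unit of a graded algebra is homogeneous of trivial degree.
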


Gradings and Galois coverings are related as follows.
A \emph{smash product category} $\B\#X$ is inferred from a grading $X$ of $\B$, see \cite{CM, CRS ANT 10}:

\[ (\B\# X)_0= \B_0 \times \Gamma_X  \mbox{ and }  {}_{(y,t)}(\B\# X)_{(x,s)}= X^{t^{-1}s}{}_y\B_x. \]

One can easily check that there is a well-defined composition inherited from composition in $\B$, which makes $\B\# X$
a $k$-category. Moreover, if $\B$ and $X$ are connected, the category $\B\#X$ is connected
and the natural functor $F_X:\B\# X\to \B$ is a Galois covering of $\B$, see \cite{CM}.
Up to isomorphism, this construction provides all Galois coverings of $\B$.

In case there exists a \emph{universal} grading, see for instance  \cite{CRS JNG}, its group is the fundamental group of $\B$. This occurs for
some families of categories \cite{CRS ART 12} but in general a universal grading do not exist. In order to deal with the general situation,  in \cite{CRS ART 12, CRS JNG} a fundamental group \emph{\`a la Grothendieck} is defined as follows.

A \emph{morphism} $\mu:X\to Y$ \emph{of connected gradings} is a group morphism
$\mu:\Gamma_X \to \Gamma_Y$ such that there exists a homogeneous  invertible endofunctor $J:\B \to \B$ which is the identity on objects and such that  for a chosen $b_0\in \B_0$, the diagram

\[
\xymatrix{
{}_{b_0}HW_X(\B)_{b_0} \ar@{->>}[d]_{\deg_X} \ar[r]^{HW(J)} & {}_{b_0}HW_Y(\B)_{b_0}\ar@{->>}[d]^{\deg_Y}\\
\Gamma_X \ar[r]_{\mu}   &\Gamma_Y}
\]

commutes. Note that in general $J$ is not uniquely determined by $\mu$.

\begin{rem}\label{surjective}
We observe that if $\mu:X\to Y$ is a morphism of connected gradings then $\mu:\Gamma_X\to \Gamma_Y$ is surjective.
\end{rem}

\begin{defi}\label{fundgroup}
The \emph{fundamental group} $\Pi_1(\B,b_0)$ is the set of families $\{\gamma_X\}$,
where $\gamma_X\in \Gamma_X$ and $X$ varies amongst the connected gradings of $\B$,
such that for any morphism of connected gradings $\mu:X \to Y$ {the} equality
$\mu(\gamma_X)=\gamma_Y$ holds.  The product of these families is the pointwise product.
\end{defi}

\medskip

Next we  recall the definition of a graded Hopf algebra, see S. Montgomery \cite[10.5]{mon}. Note that M. Aguiar and S. Mahajan in \cite{aguiar} consider graded Hopf algebras as a setting where their results can be extended. Graded Hopf algebras are also used by F. Patras in \cite{patras}.

\begin{defi}
Let $H$ be a Hopf $k$-algebra. A \emph{grading} $X$ of $H$ by an abelian group $\Gamma_X$ is a grading of the underlying associative algebra by $\Gamma_X$, which verifies that the comultiplication, the  counit and  the antipode are homogeneous.
\end{defi}
If $k^G\subset H$ is an embedding of Hopf algebras, a grading as above which verifies in addition that $k^G$ is of trivial degree is called a $k^G$-grading.

\begin{rem}
As mentioned in the introduction, the fact that $\Gamma_X$ is abelian ensures that $H\otimes H$ is a graded algebra. Moreover it is well known that in this situation the category of graded $H$-modules is monoidal.
\end{rem}
\normalsize
\begin{defi}
Let $\H$ be a Hopf $k$-category over $G$. A \emph{Hopf grading} $X$ of $\H$ by an abelian group $\Gamma_X$ is a structure obtained by transporting a $k^G$- grading of $\oplus\H$ using Theorem \ref{onetoone}.
\end{defi}

\begin{thm}
Let $\H$ be a Hopf $k$-category over a finite group $G$. Let $X$ be a Hopf grading of $\H$ by an abelian group $\Gamma_X$.
If $\Gamma_X$ is finite, then the smash product category $\H\# X$ is a Hopf $k$-category over the group $G\times \Gamma_X$.
\end{thm}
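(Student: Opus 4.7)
The plan is to reduce to a Hopf algebra statement via Theorem \ref{onetoone}: we need to show that $\oplus(\H\# X)$ admits a Hopf $k$-algebra structure containing $k^{G\times\Gamma_X}$ as a Hopf subalgebra. Setting $A=\oplus\H$, the Hopf grading $X$ is, in the terminology used just before the statement, a $k^G$-grading of $A$ by $\Gamma_X$, that is, a $\Gamma_X$-grading of the Hopf algebra $A$ whose structure maps are homogeneous and for which $k^G\subseteq A_1$.

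First, I would identify $\oplus(\H\# X)$ as an algebra with a smash product of $A$ and $k^{\Gamma_X}$, in the spirit of the smash product algebra used in \cite{CM, CRS ANT 10} to describe the smash product category: as a vector space it is $A\otimes k^{\Gamma_X}$, with the $k^{\Gamma_X}$-factor recording the extra $\Gamma_X$-labelling of objects. A morphism space ${}_{(y,t)}(\H\# X)_{(x,s)}=X^{t^{-1}s}{}_y\H_x=\delta_yA_{t^{-1}s}\delta_x$ is matched with an explicit slot of $A\otimes k^{\Gamma_X}$, and a routine check shows composition in the category matches the smash product multiplication, where $k^{\Gamma_X}$ acts on $A$ through the grading ($\delta_s$ acting as the projector onto $A_s$).

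Second, I would show that this smash product is a Hopf algebra. This is the standard fact that, for an abelian finite group $\Gamma$, a Hopf algebra $A$ endowed with a $\Gamma$-grading compatible with its full Hopf structure becomes a Hopf algebra object in the symmetric monoidal category of $k\Gamma$-comodules (equivalently $k^\Gamma$-modules), and its Radford biproduct with $k^\Gamma$ is a Hopf algebra. Its comultiplication satisfies
\[
\Delta(a\otimes\delta_s)=\sum_{s''s'=s}(a_{(1)}\otimes\delta_{s''})\otimes(a_{(2)}\otimes\delta_{s'}),
\]
with counit $\epsilon_A\otimes\epsilon_{k^{\Gamma_X}}$ and antipode built from $S_A$ and the grading. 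Since $k^G\subseteq A_1$ and $k^{\Gamma_X}$ embeds as a Hopf subalgebra of the biproduct, their product gives a canonical Hopf subalgebra $k^G\otimes k^{\Gamma_X}\cong k^{G\times\Gamma_X}$. Theorem \ref{onetoone} then produces the Hopf $k$-category structure on $\H\# X$ over $G\times\Gamma_X$.

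The main obstacle is the bookkeeping required to transport the biproduct formulas back to the category language and verify the axioms of Definition \ref{Hopf k category}; concretely, one must see that the induced ${}_{(y,t)}\Delta_{(x,s)}$ lands in the direct sum indexed by factorizations $(x,s)=(x''x', s''s')$ and $(y,t)=(y''y', t''t')$. The abelianness of $\Gamma_X$ is used at two decisive moments: it guarantees $(t''^{-1}s'')(t'^{-1}s')=t^{-1}s$, so that the $(p,q)$-homogeneous pieces of $\Delta_\H f$ for $pq=t^{-1}s$ distribute correctly over the choices of $(s'', t'')$; and it gives $t^{-1}s=st^{-1}$, which is precisely what makes the antipode, preserving the grading on $A$, land in ${}_{(x^{-1},s^{-1})}(\H\# X)_{(y^{-1},t^{-1})}=X^{st^{-1}}{}_{x^{-1}}\H_{y^{-1}}$ as required.
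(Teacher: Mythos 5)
Your proposal is correct, but it takes a genuinely different route from the paper. The paper works entirely at the level of the category $\H\# X$ (and only writes out the single-object case ``for simplicity''): it defines the coproduct of a morphism ${}_tf_{t\deg_X(f)}$ directly by the formula $\sum_x {}_{tx^{-1}}f''\otimes{}_xf'$, checks coassociativity by an explicit change of summation variables, and leaves the counit and antipode to the reader; abelianness of $\Gamma_X$ is used exactly where you use it for the comultiplication, namely to see that the degree constraint is implied by the object constraint $vu=t$. You instead pass through Theorem \ref{onetoone}: identify $\oplus(\H\# X)$ with the smash product $A\otimes k^{\Gamma_X}$ of $A=\oplus\H$ (with $\delta_s$ acting as the projection onto $A_s$), equip it with the tensor-product coalgebra structure, and invoke the bosonization/biproduct fact that this is a Hopf algebra because $A$ is a Hopf algebra in the symmetric monoidal category of $\Gamma_X$-graded vector spaces; here abelianness appears as cocommutativity of $k^{\Gamma_X}$, i.e.\ as the symmetry of that category. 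Your route has real advantages: it treats an arbitrary base group $G$ uniformly rather than the single-object case, and it exhibits explicitly the Hopf subalgebra $k^G\otimes k^{\Gamma_X}\cong k^{G\times\Gamma_X}$, which is precisely what Theorem \ref{onetoone} needs and which the paper's proof leaves implicit. The cost is that the ``standard fact'' you cite still hides a computation --- the bialgebra compatibility $\Delta(ab)=\Delta(a)\Delta(b)$ for the smash product, where abelianness is used to see that $(s''u''^{-1})(s'u'^{-1})=su^{-1}$ --- which is of essentially the same weight as the paper's coassociativity check. Two small points to tidy: the paper's convention ${}_{(y,t)}(\H\# X)_{(x,s)}=X^{t^{-1}s}{}_y\H_x$ versus the $X^{ts^{-1}}$ slot produced by the left smash product differ only by inversion on $\Gamma_X$, harmless since $\Gamma_X$ is abelian, but worth saying; and one should note that the finiteness of $\Gamma_X$ is what keeps $G\times\Gamma_X$ finite so that Theorem \ref{onetoone} applies at all.
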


\begin{rem} We require that the grading group is finite in order to insure that the group $G\times \Gamma_X$ of the  Hopf $k$-category obtained  via the smash product is finite.
\end{rem}
\noindent\textbf{Proof. }\sf
For simplicity we consider a single object category $\H$, hence the set of objects of the smash product is  $\Gamma_X$.

Each homogeneous element $f$ of $\H$ gives rise to a family of morphisms in $\H\#X$
\[\left\{  {}_{t} f_{ t\deg_X(f)}\right\}_{t\in\Gamma_X}.\]
Letting $f$ vary we obtain all the morphisms of $\H\# X$.
On the other hand, the coproduct of $\H$ provides $$\Delta(f)=\sum f''\otimes f',$$ where $f{''}$ and $f'$ are homogeneous and verify $\deg_X(f)=\deg_X(f{''})\deg_X(f')$.

In order to define the coproduct of a morphism, we consider the family
\[\left\{         \li{v}{ f''}_{v\deg_X(f'')}  \otimes   {}_{{}_u}{ f'}_{u\deg_X(f')} \right\}\]
with $u, v \in \Gamma_X$. Our purpose is to set a Hopf category structure on $\H\#X$, consequently we retain the members of this family verifying
 $$vu=t \mbox{ and } v\deg_X(f{''}) u \deg_X(f')= t \deg_X(f).$$ Since $\Gamma_X$ is abelian, the second requirement is a consequence of the first one. The  coproduct of the smash product is thus defined as follows
\[\Delta \left({}_{t} f_{ t\deg_X(f)}\right)= \sum_{t,x} \ \li{tx^{-1}} {f''}_{\ tx^{-1}\deg_X(f'')}  \otimes   \li{x}{f'}_{x\deg_X(f')} .\]

In order to prove that $\Delta$ is coassociative, consider the following computations:

\[(1\otimes \Delta)\Delta \left({}_{t} f_{ t\deg_X(f)}\right) =   \sum_{t,x,y} \  \ \li{tx^{-1}} {f'''}_{\ tx^{-1}\deg_X(f''')}  \otimes\li{xy^{-1}} {f''}_{\ xy^{-1}\deg_X(f'')}  \otimes   \li{y}{f'}_{y\deg_X(f')}, \]

\[\Delta (\Delta\otimes 1) \left({}_{t} f_{ t\deg_X(f)}\right) = \sum _{t,u,v} \  \ \li{tu^{-1}v^{-1}} {f'''}_{tu^{-1}v^{-1}\deg_X(f''')}  \otimes \li{v}{f''}_{v\deg_X(f'')} \otimes   \li{u}{f'}_{u\deg_X(f')} .\]

Replacing $v$ by $xy^{-1}$ and $y$ by $u$ both expressions coincide. The verification for the antipode and the counit are straightforward.
\qed

Recall that the category of  left modules over a Hopf algebra $H$ admits a monoidal structure given by the tensor product over the base field $k$ and the comultiplication $\Delta$ on $H$.
 Considering,  as usual, linear functors from a Hopf $k$-category $\H$ to $k$-vector spaces as left $\oplus\H$-modules, provides an immediate extension of this construction and a monoidal structure on such functors which are simply called $\H$-modules.

Using the above results the proof of the following result is clear.

\begin{thm}
Let $\H$ be a Hopf $k$-category over $G$ with a grading $X$ whose structure group $\Gamma_X$  is finite and abelian. There is an isomorphism of monoidal categories between $\Gamma_X$-graded modules over $\H$, and modules over the smash product $\H\#X.$
\end{thm}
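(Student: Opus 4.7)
The plan is to first establish the isomorphism of the underlying $k$-linear module categories, and then to verify that it intertwines the two monoidal structures.

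For the underlying isomorphism, I define a functor $F$ sending a $\Gamma_X$-graded $\H$-module $M$ (a $k$-linear functor from $\H$ to $k$-vector spaces together with direct sum decompositions $M(x) = \bigoplus_{s \in \Gamma_X} M(x)^s$ compatible with the grading of $\H$) to the $\H\# X$-module $F(M)$ given by $F(M)(x,s) = M(x)^s$; using that a morphism of $\H\# X$ from $(x,s)$ to $(y,t)$ is precisely a homogeneous morphism of $\H$ of degree $t^{-1}s$, one reads off the $\H\# X$-action from the $\H$-action. A strict inverse $G$ sends an $\H\# X$-module $N$ to the graded $\H$-module $G(N)(x) = \bigoplus_s N(x,s)$ with the evident grading and action. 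That $F$ and $G$ are mutually inverse is a routine extension to the $k$-category setting of the classical Cohen--Montgomery correspondence recorded in \cite{CM, CRS ANT 10}.

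The monoidal structure on $\Gamma_X$-graded $\H$-modules is the standard one induced by the comultiplication of $\H$: one sets $(M \otimes N)(x) = M(x) \otimes N(x)$ with grading $(M\otimes N)(x)^s = \bigoplus_{s''s' = s} M(x)^{s''} \otimes N(x)^{s'}$, and a morphism $f$ of $\H$ acts through $\Delta(f) = \sum f'' \otimes f'$. This is well-defined precisely because $X$ is a Hopf grading: $\Delta$ is homogeneous and $\Gamma_X$ is abelian, so $\H\otimes \H$ is $\Gamma_X$-graded. The monoidal structure on $\H\# X$-modules is the one coming from the Hopf $k$-category structure on $\H\# X$ established in the previous theorem, whose comultiplication was built from that of $\H$ decorated with the grading data.

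To conclude, I check that $F$ is strong monoidal. Unwinding definitions,
\[ F(M \otimes N)(x,s) = \bigoplus_{s''s'=s} M(x)^{s''} \otimes N(x)^{s'} = \bigoplus_{s''s'=s} F(M)(x,s'') \otimes F(N)(x,s'), \]
which matches exactly the decomposition dictated by the comultiplication of $\H\# X$ applied to the identity of $(x,s)$. The compatibility of actions follows directly from the explicit formula
\[ \Delta\left({}_{t} f_{t\deg_X(f)}\right) = \sum_{x} \li{tx^{-1}}{f''}_{tx^{-1}\deg_X(f'')} \otimes \li{x}{f'}_{x\deg_X(f')} \]
for the comultiplication of $\H\# X$: each summand acts on $F(M)\otimes F(N)$ in exactly the same way that the corresponding term $f''\otimes f'$ of $\Delta(f)$ acts on the relevant graded pieces of $M\otimes N$. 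The main bookkeeping point, and the only place requiring care, is to verify that the indexing of pairs $(s'',s')$ with $s''s'=s$ on the graded side coincides with the indexing of splittings of the target object in $\H\# X$ on the other; this matching is forced by the very construction of the comultiplication of $\H\# X$ and depends crucially on the abelianness of $\Gamma_X$.
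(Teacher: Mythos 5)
Your proposal is essentially the argument the paper has in mind: the paper in fact gives no proof at all, stating only that ``using the above results the proof of the following result is clear,'' and the intended content is exactly what you spell out --- the Cohen--Montgomery correspondence $F(M)(x,s)=M(x)^s$ from \cite{CM} for the underlying module categories, combined with a comparison of the two comultiplications, where the matching of the index $(s'',s')$ with $s''s'=s$ against the splittings of objects of $\H\# X$ is the only point of substance. So in that sense you have supplied the details the authors omit, and your identification of where abelianness of $\Gamma_X$ enters is correct.

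One imprecision: your formula $(M\otimes N)(x)=M(x)\otimes N(x)$ for the tensor product of modules over a Hopf $k$-category is only correct when $G$ is trivial, i.e.\ when $\H$ is an ordinary Hopf algebra. For a Hopf $k$-category over a nontrivial finite group $G$, the monoidal structure is induced by the comultiplication of $\oplus\H$, and since $\Delta({}_x1_x)=\sum_{x''x'=x}{}_{x''}1_{x''}\otimes{}_{x'}1_{x'}$, the value of $M\otimes N$ at the object $x$ is the convolution $\bigoplus_{x''x'=x}M(x'')\otimes N(x')$, not $M(x)\otimes N(x)$. The same correction propagates to the smash product side, where $\H\# X$ is a Hopf $k$-category over $G\times\Gamma_X$ and the tensor product convolves over both factors. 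This does not affect the mechanism of your argument --- the $\Gamma_X$-bookkeeping you carry out is unchanged, and one simply drags an extra $G$-index along --- but as written your verification only covers the single-object case (which, to be fair, is also the only case the paper treats explicitly in the proof of the preceding theorem on $\H\# X$). You should either restrict your statement to that case or restate the tensor product with the convolution over $G$.
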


\section{\sf Fundamental group of a Hopf linear category}

In this section we will follow the lines of \cite{CRS JNG} as described in the previous section, in order to define the fundamental group of a Hopf $k$-category $\H$ over a finite group $G$ with identity element $1$. There is a morphism $\tau$ of the fundamental group of $\H$ to this new fundamental group which takes into account the Hopf structure of $\H$. In general $\tau$ is not surjective neither injective but we prove that in case $\H$ admits both universal grading and universal Hopf grading, then it is surjective.
\begin{defi}\label{morphism Hopf gradings}
Let $X$ and $Y$ be Hopf connected gradings of $\H$. A morphism $\mu: X\to Y$ is a morphism of groups $\mu: \Gamma_X \to \Gamma_Y$ such that there exists at least one homogeneous invertible endofunctor $J:\H \to \H$ of Hopf $k$-categories such that the following diagram commutes:

\[
\xymatrix{
{}_{1}HW_X(\B)_{1} \ar@{->>}[d]_{\deg_X} \ar[r]^{HW(J)} & {}_{1}HW_Y(\B)_{1}\ar@{->>}[d]^{\deg_Y}\\
\Gamma_X \ar[r]_{\mu}   &\Gamma_Y}
\]
\end{defi}

\begin{rem}\label{surjectiveHopf}
As observed for gradings of a $k$-category, if $\mu:X\to Y$ is a morphism of Hopf connected gradings then $\mu:\Gamma_X\to \Gamma_Y$ is surjective.
\end{rem}

As mentioned  in the Introduction, following methods closely related to the way in which the fundamental group is considered in algebraic geometry, we define the fundamental group of a Hopf $k$-category $\H$ as follows.

\begin{defi}\label{hopffundamental}
Let $\H$ be a Hopf $k$-category over a finite group $G$. An element of the fundamental group $\Pi_1^ {\mathsf H}(\H)$ is a family $\{\gamma_X\}$ where $X$ varies amongst the connected Hopf gradings of $\H$ with abelian finite grading group,
$\gamma_X\in \Gamma_X$, and the family verifies $\mu(\gamma_X)= \gamma_Y$ for every morphism of Hopf gradings $\mu:X\to Y$ .
The product of $\Pi_1^ {\mathsf H}(\H)$ is the pointwise product of the families.
\end{defi}

\begin{rem}
The fundamental group of a Hopf $k$-category over a finite group $G$ is abelian since all the $\Gamma_X$'s are abelian.
\end{rem}

\begin{lem}
The fundamental group of the trivial Hopf $k$-category over a finite group is zero.
\end{lem}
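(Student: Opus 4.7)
The plan is to show that $\mathcal{T}_G$ admits only one connected Hopf grading, namely the trivial one by the trivial group, which immediately yields the result.

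First, I would identify the sum-algebra $\oplus \mathcal{T}_G$ with the Hopf algebra $k^G$. By Example \ref{trivial} every non-zero morphism of $\mathcal{T}_G$ is an identity, so as an algebra $\oplus \mathcal{T}_G = \bigoplus_{x\in G} k\,{}_x1_x$ is a product of copies of $k$ indexed by $G$. Identifying ${}_x1_x$ with the Dirac mass $\delta_x$, the comultiplication, counit and antipode prescribed in Example \ref{trivial} are precisely those recalled in Remark \ref{dualofagroup}, so $\oplus \mathcal{T}_G \cong k^G$ as Hopf algebras. Under the correspondence of Theorem \ref{onetoone} the trivial Hopf category $\mathcal{T}_G$ therefore corresponds to the identity embedding $k^G \subset k^G$ (as was already observed in the remark following that theorem).

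Next, I would unfold the definition of a Hopf grading: a Hopf grading $X$ of $\mathcal{T}_G$ by an abelian group $\Gamma_X$ is, via Theorem \ref{onetoone}, a $k^G$-grading of $\oplus \mathcal{T}_G = k^G$, that is, a grading of the algebra $k^G$ in which the image of the Hopf subalgebra $k^G$ lies in trivial degree. But for $\mathcal{T}_G$ that image is all of $k^G$, hence every homogeneous component has trivial degree, and the support of $X$ is contained in $\{1\}$.

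Finally, $\oplus \mathcal{T}_G$ is a single-object $k$-category, so by Lemma \ref{support} the grading $X$ is connected if and only if its support generates $\Gamma_X$. Combined with the previous paragraph this forces $\Gamma_X = \{1\}$, so the only connected Hopf grading of $\mathcal{T}_G$ with abelian finite grading group is the trivial one by the trivial group. The family $\{\gamma_X\}$ of Definition \ref{hopffundamental} is then reduced to a single component forced to be the neutral element, and $\Pi_1^{\mathsf H}(\mathcal{T}_G)$ is trivial. I do not anticipate a genuine obstacle here; the crux is the observation that the canonical Hopf subalgebra $k^G$ already exhausts the whole sum-algebra of $\mathcal{T}_G$, leaving no room for a non-trivial grading group.
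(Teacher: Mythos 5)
Your argument is correct; note that the paper actually states this lemma without any proof, so there is nothing to compare against, but what you write is surely the intended justification. The crux is exactly the one you identify: by definition a Hopf grading of $\mathcal{T}_G$ is transported from a $k^G$-grading of $\oplus\mathcal{T}_G$, i.e.\ a grading in which the canonical copy of $k^G$ sits in trivial degree, and since $\oplus\mathcal{T}_G$ \emph{is} $k^G$ (via ${}_x1_x\mapsto\delta_x$, as the remark after Theorem \ref{onetoone} records), the support of any such grading is contained in $\{1\}$; connectedness then forces $\Gamma_X$ to be trivial, so the product in Definition \ref{hopffundamental} runs over a single trivial group. One small point worth being explicit about: you invoke Lemma \ref{support}, which concerns single-object categories, whereas $\mathcal{T}_G$ itself has $|G|$ objects and (for $|G|\geq 2$) is not even connected as a $k$-category, since all morphism spaces between distinct objects vanish. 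Measuring connectedness on the sum-algebra, as you do, is the reading consistent with the paper's later treatment of $k^{C_n}$ as a single-object Hopf category, and in any case the conclusion is unaffected: all homogeneous degrees occurring are trivial, so any reasonable notion of connectedness forces $\Gamma_X=\{1\}$. It would be worth one sentence in your write-up acknowledging this choice of convention.
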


Observe that if  $F:\H\to\H'$ is an isomorphism of Hopf $k$-categories over the same group $G$, then there is an isomorphism  $\Pi_1^ {\mathsf H}(\H)\to \Pi_1^ {\mathsf H}(\H').$

\begin{thm}\label{themorphism}
Let $\H$ be a Hopf category over a finite group $G$. There is a group homomorphism
$$\tau:\Pi_1(\H, 1)\longrightarrow \Pi_1^ {\mathsf H}(\H).$$

\end{thm}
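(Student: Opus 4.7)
The plan is to define $\tau$ by restriction of indexing families. Every connected Hopf grading $X$ of $\H$ by a finite abelian group $\Gamma_X$ is, upon forgetting the compatibility of the comultiplication, counit and antipode with the grading, a connected grading of the underlying $k$-category $\H$. Hence for a family $\{\gamma_X\}\in\Pi_1(\H,1)$ indexed by \emph{all} connected gradings of $\H$, the sub-family obtained by retaining only those $X$ which are Hopf connected with $\Gamma_X$ finite abelian is a candidate element of $\Pi_1^{\mathsf H}(\H)$. I would therefore set
\[
\tau\bigl(\{\gamma_X\}\bigr) \;=\; \{\gamma_X\}_{X\text{ Hopf connected, }\Gamma_X\text{ finite abelian}}.
\]

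To check that $\tau\bigl(\{\gamma_X\}\bigr)$ lies in $\Pi_1^{\mathsf H}(\H)$, I need the compatibility $\mu(\gamma_X)=\gamma_Y$ for every morphism of Hopf gradings $\mu:X\to Y$. Such a $\mu$ comes equipped, by Definition \ref{morphism Hopf gradings}, with a homogeneous invertible endofunctor $J:\H\to\H$ \emph{of Hopf $k$-categories} making the square commute. Forgetting the Hopf structure on $J$, what remains is a homogeneous invertible $k$-linear endofunctor of the underlying $k$-category $\H$ satisfying exactly the same commuting square, so the very same data witness $\mu$ as a morphism of ordinary connected gradings in the sense of Section \ref{gradings}. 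Since $\{\gamma_X\}$ belongs to $\Pi_1(\H,1)$ it satisfies $\mu(\gamma_X)=\gamma_Y$ for every such ordinary morphism, and in particular for those arising from Hopf morphisms. Thus $\tau\bigl(\{\gamma_X\}\bigr)$ is a well-defined element of $\Pi_1^{\mathsf H}(\H)$.

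The homomorphism property is then immediate: both groups carry pointwise multiplication, and restriction of families commutes with pointwise products. The only conceptually delicate point in the argument is the well-definedness, which relies on the observation that any Hopf endofunctor of $\H$ is \emph{a fortiori} a $k$-linear endofunctor. This is a straightforward unpacking of the definitions, but it is the key step because the two fundamental groups are built out of quite different indexing data: $\Pi_1(\H,1)$ records all connected gradings and is tested against \emph{all} ordinary morphisms of gradings, while $\Pi_1^{\mathsf H}(\H)$ restricts the objects to finite abelian Hopf gradings but simultaneously restricts the tested morphisms to Hopf morphisms; the fact that the second restriction sits inside the first is exactly what makes $\tau$ exist. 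No surjectivity or injectivity is claimed here, consistent with the remark preceding the statement.
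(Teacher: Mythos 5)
Your proposal is correct and follows essentially the same route as the paper: $\tau$ is defined by restricting a compatible family to the subfamily indexed by connected Hopf gradings, and well-definedness follows because every morphism of Hopf gradings is in particular a morphism of ordinary connected gradings. Your write-up merely makes explicit the forgetful step on the witnessing endofunctor $J$, which the paper's proof leaves implicit.
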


\begin{proof}
Let $\{\gamma_X\}$ be an element of $\Pi_1(\H,1)$, that is a family  of elements of  $\Gamma_X$ where $X$ varies amongst the connected gradings of the $k$-category, and  $\mu(\gamma_X)=\gamma_Y$ for every morphism of connected gradings $\mu:X \to Y$. By definition, $\tau\{\gamma_X\}$ is the subfamily given by the connected gradings which are also Hopf gradings of $\H$. Since the entire family is compatible with respect to morphisms of gradings, the subfamily verifies $\mu(\gamma_X)=\gamma_Y$ for any morphism $\mu$ of connected Hopf gradings. \qed
\end{proof}

\begin{rem}
The fundamental group of a $k$-category is not abelian in general, while the fundamental group of a Hopf category is always abelian. Consequently the morphism $\tau$ is not injective in general. Nevertheless we will see in the next section that $\tau$ is injective for Taft categories. On the contrary, for the Hopf algebra of $k$-valued maps over cyclic groups $\tau$ may not be injective.
\end{rem}

In what follows we will study the morphism $\tau$ whenever universal covers exist. As already mentioned, we recall from \cite{CRS JNG} that a connected grading $U$ of a $k$-category $\B$ is \emph{universal} if for any connected grading $X$ of $\B$ there exists a unique morphism of gradings $\mu: U\to X$. If it exists, the fundamental group $\Pi_1(\B, b_0)$ is isomorphic to $\Gamma_U$.

\begin{defi}
Let $\H$ be a Hopf $k$-category over a finite group. A connected Hopf grading $V$ of $\H$ is \emph{universal} if for any connected Hopf grading $Y$ of $\H$ there exists a unique morphism $\mu:V\to Y$.
\end{defi}

In subsequent sections we will consider examples of Hopf $k$-categories; some of them admit a universal Hopf covering, while others do not.

\begin{lem}
Let $\H$ be a Hopf $k$-category over a finite group admitting a universal Hopf grading $V$. Let $Y$ and $Y'$ be connected Hopf gradings. There exists at most one morphism of connected Hopf gradings $Y\to Y'$.
\end{lem}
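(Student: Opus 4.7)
The plan is a standard universal-property argument: use the universal Hopf grading $V$ as a ``test object'' against which any two parallel morphisms must agree, then transfer the equality from $V$ to $Y$ using the surjectivity observed in Remark \ref{surjectiveHopf}.

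First I would check, as a preliminary, that morphisms of connected Hopf gradings compose. Given $\mu : X \to Y$ with witnessing endofunctor $J$, and $\nu : Y \to Z$ with witnessing endofunctor $K$, the group morphism $\nu \circ \mu : \Gamma_X \to \Gamma_Z$ is witnessed by $K \circ J$: pasting the two commuting squares from Definition \ref{morphism Hopf gradings} along ${}_1 HW_Y(\H)_1$ yields the required commuting square for $\nu \circ \mu$, and $K\circ J$ is still a homogeneous invertible endofunctor of Hopf $k$-categories. Thus the collection of connected Hopf gradings and their morphisms forms a category.

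Next, suppose $\mu_1, \mu_2 : Y \to Y'$ are both morphisms of connected Hopf gradings. By universality of $V$, there is a unique morphism $\nu : V \to Y$. Composing with $\mu_1$ and $\mu_2$ gives two morphisms $\mu_1 \circ \nu, \mu_2 \circ \nu : V \to Y'$ of connected Hopf gradings. Again by universality of $V$, the morphism $V \to Y'$ is unique, so
\[ \mu_1 \circ \nu = \mu_2 \circ \nu \]
as group morphisms $\Gamma_V \to \Gamma_{Y'}$.

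Finally, Remark \ref{surjectiveHopf} tells us that the underlying group morphism $\nu : \Gamma_V \to \Gamma_Y$ is surjective. Cancelling $\nu$ on the right in the equality above gives $\mu_1 = \mu_2$ as group morphisms $\Gamma_Y \to \Gamma_{Y'}$, which is what ``equality of morphisms of connected Hopf gradings'' means (the witnessing endofunctors are not part of the data of a morphism, only their existence is required). No step looks delicate: the only thing to be careful about is that the notion of morphism in Definition \ref{morphism Hopf gradings} records only the group homomorphism, so that ``uniqueness'' is uniqueness at the level of $\Gamma_X \to \Gamma_Y$, making the surjectivity argument decisive.
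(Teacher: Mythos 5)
Your argument is correct and is essentially identical to the paper's proof: both precompose $\mu_1$ and $\mu_2$ with the unique morphism $V\to Y$, identify both composites with the unique morphism $V\to Y'$, and then cancel using the surjectivity from Remark \ref{surjectiveHopf}. The only difference is that you explicitly verify that morphisms of connected Hopf gradings compose, a point the paper takes for granted.
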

\begin{proof}
For a Hopf connected grading $Y$ let $\mu_Y$ be the unique morphism $\Gamma_V\to \Gamma_Y$. Let $\mu_1$ and $\mu_2 : \Gamma_Y\to \Gamma_{Y'}$ be morphisms of Hopf gradings. We infer $\mu_1 \mu_Y = \mu_{Y'} = \mu_2 \mu_Y$. Recall that morphisms of connected Hopf gradings are surjective by Remark \ref{surjectiveHopf}. Hence $\mu_1=\mu_2$.
\qed
\end{proof}

\begin{pro}\label{universal}
Let $\H$ be a Hopf $k$-category over a finite group admitting a universal Hopf grading $V$. There is an isomorphism
$$\varphi : \Pi_1^{\mathsf H}(\H)\longrightarrow\Gamma_V.$$
\end{pro}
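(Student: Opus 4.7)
The plan is to define $\varphi$ by $\varphi(\{\gamma_X\}) = \gamma_V$, that is, evaluating the family on the universal grading itself. This is a group homomorphism since the product in $\Pi_1^{\mathsf H}(\H)$ is pointwise, and projection onto the $V$-component is a group morphism $\prod_X \Gamma_X \to \Gamma_V$. So the bulk of the work is to establish bijectivity. For each connected Hopf grading $X$, write $\mu_X : V \to X$ for the unique morphism of Hopf gradings guaranteed by the universality of $V$; note that $\mu_V = \mathsf{id}_{\Gamma_V}$, again by uniqueness applied to $V\to V$.

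For injectivity, suppose $\{\gamma_X\}$ and $\{\gamma'_X\}$ are in $\Pi_1^{\mathsf H}(\H)$ with $\gamma_V=\gamma'_V$. Since $\mu_X:V\to X$ is a morphism of connected Hopf gradings, the compatibility condition in Definition \ref{hopffundamental} gives $\gamma_X=\mu_X(\gamma_V)$ and $\gamma'_X=\mu_X(\gamma'_V)$, hence $\gamma_X=\gamma'_X$ for every $X$. In particular $\varphi$ is injective.

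For surjectivity, given $\gamma\in\Gamma_V$, I would define a candidate family by $\gamma_X:=\mu_X(\gamma)$ for every connected Hopf grading $X$; then $\gamma_V=\gamma$, so it only remains to check that this family lies in $\Pi_1^{\mathsf H}(\H)$. Let $\nu:X\to Y$ be any morphism of connected Hopf gradings, associated to some invertible Hopf endofunctor $J_\nu:\H\to\H$. Composing with $\mu_X$, associated to $J_X$, yields the group morphism $\nu\circ\mu_X:\Gamma_V\to\Gamma_Y$ together with the invertible Hopf endofunctor $J_\nu\circ J_X$, and a direct check against the defining commutative square of Definition \ref{morphism Hopf gradings} shows that $\nu\circ\mu_X$ is a morphism of Hopf gradings $V\to Y$. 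By uniqueness of $\mu_Y$, we conclude $\nu\circ\mu_X=\mu_Y$, and therefore
\[ \nu(\gamma_X)=\nu(\mu_X(\gamma))=\mu_Y(\gamma)=\gamma_Y, \]
which is the required compatibility.

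The only step requiring care is the verification that the composition $\nu\circ\mu_X$ is itself a morphism of Hopf gradings, i.e.\ that the class of such morphisms is closed under composition; this is straightforward but implicitly used. Everything else amounts to unwinding the definitions of $\Pi_1^{\mathsf H}(\H)$ and of the universal Hopf grading. No homotopical or combinatorial obstacle appears, and the argument is essentially the usual universal-object argument that identifies a limit of a diagram possessing an initial object with the value at that initial object.
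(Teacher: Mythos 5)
Your proof is correct and follows essentially the same route as the paper: define $\varphi$ by evaluation at $V$, use the unique morphisms $\mu_X:V\to X$ for injectivity, and build the preimage family as $\gamma_X=\mu_X(\gamma)$ for surjectivity. The only difference is that you spell out the compatibility check for the surjectivity step (closure of morphisms of Hopf gradings under composition plus uniqueness of $\mu_Y$), which the paper leaves implicit.
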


\begin{proof}
Let  $\beta$ be an element of $\Pi^{\mathsf H}_1(\H)$, that is a family $\{\beta_Y\}$ where $Y$ varies amongst the connected Hopf gradings of $\H$ and $\mu(\beta_Y)=\beta_{Y'}$ for every morphism $\mu:Y\to Y'$. We define $\varphi(\beta)=\beta_V$.

In order to prove that $\varphi$ is injective, assume that  $\beta_V=0$. Let $Y$ be any Hopf connected grading of $\H$ and let $\mu_Y$ be the unique morphism of connected Hopf gradings $\mu_Y: V\to Y$. Since $\mu(\beta_V)=\beta_Y$ we infer that $\beta_Y=0$, hence $\beta=0$.

Let $c\in\Gamma_V$. Consider the family $\beta$ defined by $\beta_Y=\mu_Y(c)$, where $\mu_Y$ is as before the unique morphism $V\to Y$. This family is an element of $\Pi^{\mathsf H}_1(\H)$ such that $\varphi(\beta)=c.$

\qed
\end{proof}

\begin{thm}
Let $\H$ be a Hopf $k$-category. Suppose that $\H$ admits both a universal grading $U$ as a $k$-category and a universal Hopf grading $V$. The morphism
$$\tau:\Pi_1(\H,1)\longrightarrow \Pi_1^ {\mathsf H}(\H)$$
is surjective.
\end{thm}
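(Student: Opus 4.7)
The plan is to identify $\tau$ with a concrete group homomorphism $\Gamma_U \to \Gamma_V$ via the isomorphisms furnished by the two universal gradings, and then to invoke the fact that any morphism of connected gradings is surjective at the level of grading groups (Remark \ref{surjective}).

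First I would observe that a Hopf grading is, after forgetting the requirement that the witnessing endofunctor $J$ preserve the Hopf structure, a grading of the underlying $k$-category; likewise a morphism of connected Hopf gradings is automatically a morphism of connected gradings of the underlying $k$-category. In particular $V$ is a connected grading of the underlying $k$-category, so by universality of $U$ there exists a unique morphism of connected gradings $\nu_V : U \to V$, and by Remark \ref{surjective} the group map $\nu_V : \Gamma_U \to \Gamma_V$ is surjective.

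Next I would unpack $\tau$ through the isomorphism $\Pi_1(\H,1) \cong \Gamma_U$ coming from the universality of $U$ (see \cite{CRS JNG}) and through $\varphi : \Pi_1^{\mathsf H}(\H) \cong \Gamma_V$ from Proposition \ref{universal}. An element $a \in \Gamma_U$ corresponds to the family $\{\nu_X(a)\}_X$, where $X$ ranges over all connected gradings of the underlying $k$-category and $\nu_X : U \to X$ is the unique morphism of connected gradings. Applying $\tau$ simply restricts this family to the subclass of Hopf connected gradings, and then $\varphi$ evaluates the restricted family at $Y=V$, returning $\nu_V(a)$. Thus, modulo these two identifications, $\tau$ \emph{is} the surjection $\nu_V : \Gamma_U \to \Gamma_V$, which gives the desired conclusion.

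For this identification to be valid one needs a single internal compatibility: for every Hopf connected grading $Y$ with unique morphism $\mu_Y : V \to Y$ of Hopf gradings, the composite $\mu_Y \circ \nu_V : U \to Y$ must coincide with $\nu_Y$. This is immediate, since $\mu_Y \circ \nu_V$ is a morphism of connected gradings of the underlying $k$-category (again by forgetting the Hopf structure of the witnessing endofunctors and composing them), so it must agree with $\nu_Y$ by the uniqueness clause in the universality of $U$. Consequently the restricted family $\{\nu_Y(a)\}_Y$ equals $\{\mu_Y(\nu_V(a))\}_Y$, which is exactly the element of $\Pi_1^{\mathsf H}(\H)$ attached by $\varphi^{-1}$ to $\nu_V(a)$. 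This verification of $\nu_Y = \mu_Y \circ \nu_V$ is the only non-formal step, and is expected to be the only potential obstacle; everything else is a translation along universal properties.
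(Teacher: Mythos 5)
Your proof is correct and follows essentially the same route as the paper's: both identify $\Pi_1(\H,1)$ with $\Gamma_U$ and $\Pi_1^{\mathsf H}(\H)$ with $\Gamma_V$ (via Proposition \ref{universal} and its non-Hopf analogue), and both reduce surjectivity of $\tau$ to the surjectivity of the unique morphism of connected gradings $U\to V$ given by Remark \ref{surjective}. Your explicit check that $\nu_Y=\mu_Y\circ\nu_V$ is a correct (and slightly more careful) spelling-out of the compatibility the paper leaves implicit in the line ``the value of $\tau(\beta)$ at $V$ is $\gamma_V$, hence $\tau(\beta)=\gamma$.''
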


\begin{proof}
Let  $\gamma$ be an element of $\Pi^{\mathsf H}_1(\H)$, that is a family $\{\gamma_Y\}$ where $Y$ varies amongst the connected Hopf gradings of $\H$ and $\mu(\gamma_Y)=\gamma_{Y'}$ for every morphism $\mu:Y\to Y'$. Since $V$ is universal,  the previous proposition shows that $\gamma_V$ determines $\gamma$. As well,  any family  $\beta =\{\beta_X\}\in \Pi_1(\H,1)$ is determined by $\beta_U$. Consider the unique morphism of connected gradings $\mu_V: \gamma_U\to \gamma_V$, which is surjective by Remark \ref{surjective}. Let $\beta_U$ be an element in the preimage of $\gamma_V$ by $\mu$. The previous proposition shows that this element determines a unique element $\beta \in \Pi_1(\H,1)$. Observe that the value of the family $\left(\tau(\beta)\right)$ at $V$ is $\gamma_V$, hence $\tau(\beta)=\gamma$.

\qed
\end{proof}

\section{\sf Free Hopf $k$-categories over a group and Taft categories}
\normalsize

In order to define a free $k$-category the following data is required: a set $\mathcal{L}_0$ (which will be the set of objects) and a family of vector spaces
$V=\{ {}_yV_x\}_{x,y \in \L_0}$ (which will be the free generators). To each sequence  $u=(u_n, \cdots, u_0)$  of elements in $\L_0$ we attach a vector space
$$W(u) =\li{u_n}{\!V}_{u_{n-1}}\otimes \cdots \otimes\li{u_2}{\!V}_{v_1}    \otimes  {}_{u{_1}}\!V_{u_{0}}.$$
 For a singleton sequence $u=(u_0)$ we set $W(u_0) =k$.

 The set of finite sequences of elements of $\L_0$ which begin with $x$ and end with $y$ is denoted ${}_y{T}_x$. The concatenation of two  sequences
 $$v=(z=v_m, \dots, v_0=y) \mbox{\ and\ } u=(y=u_n,\dots , u_0 =x)$$ which belong respectively to  ${}_z{T}_y$ and ${}_y{T}_x$ is the sequence
 $$vu =(z=v_m, \dots, v_0=y=u_n, \dots ,u_0=x)\in {}_z{T}_x.$$

Finally let $ \li{y}{W}_x$ be the vector space ${\bigoplus_{u \in {\li{y}{T}_x}}} W(u).$

\begin{defi}\label{definition of free} Let $\L_0$ be a set and let $V$ be a family as above. The \emph{free $k$-category $\L_k(V)$} has set of objects $\L_0$. The vector space of morphisms   from   $x\in\L_0$ to $y\in\L_0$ is ${}_y W_x$. The composition is given by the direct sum of the tensor product maps $$W(v)\otimes W(u) \to W(vu).$$ The vector spaces of the family $V$ are called the \emph{generating vector spaces}.
 \end{defi}
Observe that this definition is intrinsic, in the sense that it does not depend on the choice of  bases of the vector spaces of the family. A choice of bases provides a quiver $Q$ attached to the data as follows: vertices are the elements of $\L_0$, while the set of arrows from $x$ to $y$ is the basis of  $\li{y}{V}_x $.

Let $\C$ be a  small category. Its \emph{linearisation} $k\C$ is the $k$-category with the same set of objects. Its morphisms are the vector spaces with bases the morphisms of the category. The composition of $k\C$ is obtained by extending bilinearly the composition of $\C$.

Let $Q$ be a quiver. There is a free category $\F_Q$ (see for instance \cite[p. 48]{maclane}) which objects are the vertices of $Q$ and which morphisms are the sequences of concatenated arrows (called paths), including the trivial one at each vertex.

The composition is the concatenation of paths, and the trivial paths are the identities.  The following result is clear.

 \begin{pro}
 Let $\L_k(V)$ be a free $k$-category associated to the data $\L_0$ and $V$, and let $Q$ be a quiver attached to the data. The linearisation $k\F_Q$ and the free $k$-category $\L_k(V)$ are isomorphic.
\end{pro}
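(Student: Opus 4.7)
The plan is to exhibit an explicit isomorphism by comparing bases. Fix the same set of objects $\L_0$ for both $k\F_Q$ and $\L_k(V)$, so it suffices to construct a $k$-linear isomorphism $\Phi_{y,x}:{}_y(k\F_Q)_x \to {}_yW_x$ for each pair $x,y\in\L_0$ and to verify compatibility with composition and identities. Fix, for each pair $x,y$, a basis $B_{y,x}$ of ${}_yV_x$; by construction of $Q$, this basis is in bijection with the arrows from $x$ to $y$ in $Q$.

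First I would describe a canonical basis of ${}_yW_x$. For a sequence $u=(u_n,\dots,u_0)\in {}_yT_x$ with $n\geq 1$, the space $W(u)={}_{u_n}V_{u_{n-1}}\otimes\cdots\otimes {}_{u_1}V_{u_0}$ has a basis of pure tensors $\alpha_n\otimes\cdots\otimes\alpha_1$ with $\alpha_i\in B_{u_i,u_{i-1}}$; such a tuple is exactly a path in $Q$ from $x=u_0$ to $y=u_n$ visiting the intermediate vertices $u_1,\dots,u_{n-1}$. For the singleton sequence $u=(x)$ the one-dimensional space $W(u)=k$ contributes the basis element $1$, which I would identify with the trivial path at $x$ (contributing to ${}_yW_x$ only when $y=x$). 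Taking the direct sum over $u\in {}_yT_x$, a basis of ${}_yW_x$ is precisely the set of all paths in $Q$ from $x$ to $y$, including the trivial path when $x=y$. This is, tautologically, the basis of ${}_y(k\F_Q)_x$. Define $\Phi_{y,x}$ as the $k$-linear extension of this bijection.

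Next I would check that $\Phi$ assembles into a $k$-functor. Composition in $\L_k(V)$ is, by Definition \ref{definition of free}, the direct sum of the tensor maps $W(v)\otimes W(u)\to W(vu)$, which on basis tensors simply concatenates the two strings of factors. Under the correspondence just described this is the concatenation of the two paths, i.e.\ the composition in $\F_Q$ (extended bilinearly in $k\F_Q$). Identities correspond on both sides to the basis vectors attached to the singleton sequences. Since $\Phi$ is bijective on each morphism space and respects composition and identities, it is an isomorphism of $k$-categories.

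The only nontrivial point is that $\L_k(V)$ is defined intrinsically while $\Phi$ was built from a chosen basis, so I would close by noting that a different choice of basis yields a naturally isomorphic quiver $Q'$ whose linearised free category is canonically identified with $k\F_Q$ through the change-of-basis isomorphism; equivalently one can invoke the universal property shared by both constructions as free objects over the data $(\L_0,V)$. I do not expect a genuine obstacle — the statement is essentially a book-keeping verification, which is why the authors flag it as clear.
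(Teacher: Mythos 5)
Your proposal is correct and is exactly the bookkeeping argument the paper has in mind when it declares the result ``clear'' and omits the proof: pure tensors of chosen basis vectors in $W(u)$ correspond bijectively to paths in $Q$ through the vertex sequence $u$, trivial paths correspond to the singleton summands $W(u_0)=k$, and tensor concatenation matches path concatenation. Your closing remark on basis-independence is a harmless extra, since the statement already fixes a quiver $Q$ attached to a choice of bases; nothing further is needed.
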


  We consider now a finite group $G$ which elements will become the objects of a free $k$-category. In order to obtain a family of vector spaces as above,  let $B$ be a $k^G$-Hopf bimodule. Since $k^G$ is a semisimple algebra, $B$ has a unique decomposition as a direct sum of its isotypic $k^G$-bimodule components $\delta_yB\delta_x$, where $\delta_x$ denotes the Dirac mass on $x$. Let $\H_B$ be the free $k$-linear category with set of objects $G$ and whose family of vector spaces is $\{ \delta_yB\delta_x\}_{x,y\in G}$.

  \begin{pro}(compare with \cite{CR,CR2})
  The free $k$-category $\H_B$ has a structure of Hopf $k$-category over $G$.
  \end{pro}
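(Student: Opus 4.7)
The plan is to reduce the claim, via Theorem \ref{onetoone}, to exhibiting a Hopf algebra structure on the sum-algebra $\oplus \H_B$ containing $k^G$ as a Hopf subalgebra, and then to recognise this sum-algebra as the tensor algebra of the Hopf bimodule $B$ over $k^G$, which is a classical quiver Hopf algebra.

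First I would identify $\oplus \H_B$ with the tensor algebra $T_{k^G}(B)$. By the definition of the free $k$-category $\L_k(V)$ applied to $\L_0 = G$ and the family $V = \{\delta_y B \delta_x\}_{x,y\in G}$, the morphism space from $x$ to $y$ in $\H_B$ is the direct sum over all sequences $u = (y=u_n,\dots,u_0=x)$ of the tensor products $W(u) = \delta_{u_n}B\delta_{u_{n-1}}\otimes\cdots\otimes\delta_{u_1}B\delta_{u_0}$, together with $k\,\delta_x$ when $x=y$ for the trivial sequence. Summing over all pairs $(x,y)$ collects these pieces into $k^G \oplus B \oplus (B\otimes_{k^G}B)\oplus \cdots$, and the composition of the free category (tensor product of successive factors) corresponds exactly to the concatenation product of the tensor algebra. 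Hence there is a canonical isomorphism of $k$-algebras $\oplus \H_B \cong T_{k^G}(B)$, and this isomorphism is the identity on the degree zero part $k^G$.

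Next I would invoke the classical quiver Hopf algebra construction of Cibils--Rosso \cite{CR,CR2}: whenever $B$ is a Hopf bimodule over a Hopf algebra $A$, the tensor algebra $T_A(B)$ carries a canonical Hopf algebra structure that extends the Hopf structure of $A$ and for which $A \hookrightarrow T_A(B)$ is a Hopf subalgebra. The comultiplication is fixed by $\Delta|_A = \Delta_A$ and, on the generating degree-one part $B$, by a formula built from the left coaction $\rho_L: B \to A\otimes B$ and the right coaction $\rho_R: B\to B\otimes A$; the Hopf bimodule axioms (compatibility of the two actions with the two coactions) are precisely what is needed to extend this uniquely to an algebra homomorphism $T_A(B) \to T_A(B)\otimes T_A(B)$ and to guarantee coassociativity. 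The counit and antipode are defined analogously, the counit vanishing on positive tensor degree and the antipode being extended degree by degree using the standard twisted rule. Specialising to $A = k^G$, we obtain a Hopf algebra structure on $\oplus \H_B \cong T_{k^G}(B)$ containing $k^G$ as a Hopf subalgebra.

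Finally, applying Theorem \ref{onetoone} to the Hopf algebra embedding $k^G \subset \oplus \H_B$ transports the Hopf algebra structure back to a Hopf $k$-category structure over $G$ on $\H_B$, which is the desired conclusion. The main obstacle is the intermediate step of producing the Hopf algebra structure on $T_{k^G}(B)$: this is not trivial from scratch, as one must check coassociativity, the counit axiom and the antipode axiom on all tensor degrees, and the verification genuinely uses every part of the Hopf bimodule compatibility between the $k^G$-actions and the $k^G$-coactions on $B$. This is exactly what is carried out in the quiver Hopf algebra papers \cite{CR,CR2} referenced in the statement, so the work reduces to invoking that construction and checking that it produces the structure maps required by Definition \ref{Hopf k category}, a straightforward translation via the Peirce decomposition of Theorem \ref{onetoone}.
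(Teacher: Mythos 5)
Your proposal is correct and follows essentially the same route as the paper: both define the comultiplication on the generating Hopf bimodule $B$ as the sum of its left and right $k^G$-coaction maps, extend it multiplicatively to the whole free category (equivalently, to the tensor algebra $T_{k^G}(B)$), and defer the verification of the counit and antipode axioms to the quiver Hopf algebra construction of \cite{CR,CR2}. Your extra step of passing through the sum-algebra and Theorem \ref{onetoone} is a harmless repackaging of what the paper does directly at the level of the category.
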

  \begin{proof} Through the structure of $B$, we obtain a comultiplication on each generating vector space as the sum of the left and right comodule structure morphisms, and this is linearly extended in the unique possible way to a comultiplication on the whole free $k$-category. There exist a counit and an antipode providing $\H_B$ with a structure of Hopf $k$-category over $G$, see \cite{CR2}.
  \end{proof}
\normalsize

\begin{pro}\cite{CR,CR2}
Any Hopf $k$-category over a finite group $G$ whose underlying category is free is isomorphic to a Hopf $k$-category given by the preceding construction.
\end{pro}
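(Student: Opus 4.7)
The plan is to invert the construction preceding the proposition: given a Hopf $k$-category $\H$ over $G$ whose underlying $k$-category is free, I extract a $k^G$-Hopf bimodule $B$ from the Hopf data and exhibit $\H \cong \H_B$.

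First I would isolate a generating family $V = \{{}_yV_x\}_{x,y\in G}$ from the free structure of $\H$ and set $B = \bigoplus_{x,y \in G} {}_yV_x$. The $k^G$-bimodule structure on $B$ comes for free from the source/target decomposition: for $b \in {}_yV_x$ we set $\delta_z \cdot b$ to be $b$ if $z=y$ and $0$ otherwise, and analogously on the right, so that the isotypic components are precisely ${}_yV_x = \delta_y B \delta_x$.

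The key step is to extract left and right $k^G$-coactions on $B$ from the comultiplication of $\H$. I would exploit the length grading on the free category $\H$ (with $k^G$ in degree $0$ and $V$ in degree $1$) together with the counit axiom together with the fact that $\epsilon$ is nonzero only on ${}_1\H_1$, in order to show that the generators can be chosen so that for every $b \in {}_yV_x$
\[
\Delta(b) = \lambda(b) + \rho(b),
\]
with $\lambda(b) \in k^G \otimes B$ and $\rho(b) \in B \otimes k^G$. Coassociativity of $\Delta$ then forces $\lambda$ and $\rho$ to be genuine coactions, while multiplicativity of $\Delta$ and the interaction of the antipode with $V$ encode the Hopf bimodule compatibilities.

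Once $B$ is realised as a $k^G$-Hopf bimodule, I build an isomorphism $\H \to \H_B$. The universal property of free $k$-categories supplies a canonical isomorphism of the underlying $k$-categories, since both are free with set of objects $G$ and generating family $V$. That this isomorphism intertwines the Hopf structures boils down to checking it on generators: on $k^G$ both comultiplications are the canonical one, and on $b \in V$ both give $\lambda(b) + \rho(b)$ by construction. Multiplicativity of $\Delta$ on each side then propagates equality to the whole of $\H$, and similar arguments handle the counit and antipode.

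The delicate point is the primitive-like form $\Delta(b) = \lambda(b) + \rho(b)$ on generators: this is the categorical analogue of the classical fact that in a connected graded Hopf algebra freely generated over its degree-zero part, the degree-one component is a Hopf bimodule. Pinning down such a choice of generators in the free category setting, and ruling out higher-length tensor summands in $\Delta(b)$, is where the real work lies, and it is precisely this analysis (already carried out in the cited references for the algebra case) that must be transposed to the categorical framework.
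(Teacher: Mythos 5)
The paper offers no proof of this proposition at all --- it is stated with a bare citation to \cite{CR,CR2} --- and your outline is a faithful reconstruction of the argument those references give: extract the $k^G$-Hopf bimodule from the generating spaces $\{{}_yV_x\}$, show that $\Delta$ restricted to generators splits as a left coaction plus a right coaction, and use freeness to propagate the identification $\H\cong\H_B$. You have also correctly located the only genuinely delicate point (ruling out higher-length tensor summands in $\Delta(b)$, which in \cite{CR2} rests on the length grading and the counit axiom); since the paper delegates exactly this to the references, your deferral of that step matches the paper's own level of detail.
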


\begin{exa}\cite{C}\label{taft category}
Let $C_n=\langle t|t^n=1\rangle$ be a cyclic group of order $n\ge 2$. The $n$-th crown $k$-category $\C_n$ is the free $k$-category with set of objects $C_n$ and generating morphisms  $\{a_u:u\to tu\}_{u\in C_n}$. Given $q\in k^{*}$, the category $\C_n$ has a structure of a  Hopf $k$-category, with comultiplication induced by
\[ {}_{t^{i+1}}\Delta_{t^i}(a_{t^i})= \sum_{t^jt^k=t^i}t^j\otimes a_{t^k}+ q^ka_{t^j}\otimes t^k. \]
See Lemma 3.1 of \cite{C}.

 Recall that a \emph{two-sided ideal} of a $k$-category is a family $I$ of subvector spaces of the morphisms such that the quotients inherit a well defined composition, which provides a $k$-category denoted $\C/I$.

 Let $I$ be the smallest two-sided ideal of $\C_n$ containing all the compositions of length $n$ of the $a_i$'s. If $q$ is a primitive $n$-th root of unity, then $I$ is a \emph{Hopf ideal}, meaning that $\C_n/I$ inherits a structure of Hopf $k$-category over the same group. The proof of this fact can be obtained from \cite{C}.
\end{exa}

\begin{defi}
The $n$-\emph{Taft category}  $\T_{n,q}$ is $\C_n/I$.
\end{defi}

\begin{lem}
Let $X$ be a connected grading of the underlying $k$-category $\T_{n,q}$. Then $\Gamma_X$ is cyclic.
\end{lem}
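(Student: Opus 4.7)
The plan is to reduce the statement to a classical fact about the fundamental group of a cycle graph.

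First I would note that every morphism space of $\T_{n,q}$ is at most one-dimensional. Indeed, in the crown category $\C_n$ the space ${}_{t^j}(\C_n)_{t^i}$ has as a basis the compositions $a_{t^{i+m-1}}\cdots a_{t^i}$ for each $m\geq 0$ with $m\equiv j-i\pmod{n}$, and quotienting by the ideal generated by length-$n$ compositions leaves at most one: the identity when $i=j$, and otherwise the unique path of length $(j-i)\bmod n\in\{1,\dots,n-1\}$. In particular each generator $a_{t^i}$ is $X$-homogeneous, so we may set $d_i:=\deg_X a_{t^i}\in\Gamma_X$; then every nonzero morphism of $\T_{n,q}$ is homogeneous, being a scalar multiple of a composition of generators.

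Next I would analyse the degree map on walks based at $t^0$. Any walk $w$ from $t^0$ to $t^0$ admits a refinement $w'$ in which each virtual morphism $(f,\epsilon)$ is replaced by the corresponding sequence of virtual generators; since every $a_{t^i}$ is nonzero in $\T_{n,q}$, this $w'$ is a legitimate walk and $\deg_X w'=\deg_X w$. Hence for the purpose of computing the image of $\deg_X$ we may assume $w$ uses only virtual generators, that is, $w$ is a closed walk in the underlying cycle graph $C_n$ (with vertices $t^0,\dots,t^{n-1}$ and edges $a_{t^i}\colon t^i\to t^{i+1}$).

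The main step is the following: successively removing pairs of the form $(a_{t^i},1)(a_{t^i},-1)$ or $(a_{t^i},-1)(a_{t^i},1)$ from a closed walk at $t^0$ in $C_n$ reduces it to a power $\ell^k$ of the simple forward loop $\ell=(a_{t^{n-1}},1)\cdots(a_{t^0},1)$; this is the standard identification of the combinatorial fundamental group of the cycle $C_n$ based at $t^0$ with $\ZZ$, the edge-path analogue of $\pi_1(S^1)\cong\ZZ$. Each removed pair has degree $d_id_i^{-1}=1$, so cancellation preserves $\deg_X$, and $\deg_X w=D^k$ with $D:=d_{n-1}\cdots d_1d_0\in\Gamma_X$. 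Therefore the image of $\deg_X$ is the cyclic subgroup $\langle D\rangle$, which by connectedness of $X$ must equal $\Gamma_X$, so $\Gamma_X=\langle D\rangle$ is cyclic.

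The only real point of care is the invocation of the combinatorial $\pi_1$ of $C_n$; the rest is bookkeeping about gradings and walks.
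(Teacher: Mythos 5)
Your proof is correct and follows the same route as the paper: both reduce to observing that all morphism spaces are at most one-dimensional (hence homogeneous) and that the image of $\deg_X$ on closed walks at the base object is the cyclic subgroup generated by the degree of the full forward loop, which equals $\Gamma_X$ by connectedness. The paper states this generation claim without justification, whereas you supply the missing argument via backtrack cancellation in the cycle graph; this is a welcome elaboration, not a different method.
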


\begin{proof}
 Note that $X$ is not necessarily a Hopf grading. Since the spaces of morphisms are at most one dimensional, they are homogeneous.
Let $w$ be the homogeneous walk $w=\left(a_{t^{n-1}},1\right),\dots, \left(a_t,1\right),\left(a_1,1\right)$. The image of the homomorphism $\mathsf{deg}_X:\li{1}{HW}(\C_{n,q})_1 \to \Gamma_X $   is cyclic, generated by $\mathsf{\deg}_Xw=(\mathsf{\deg}_Xa_{t^{n-1}})\dots(\mathsf{\deg}_Xa_{t})(\mathsf{\deg}_Xa_{1})$.
\end{proof}

\begin{lem}
Let $X$ be a connected Hopf grading of the $n$-Taft $k$-category $\T_{n,q}$.
The generating morphisms share the same degree and the grading group $\Gamma_X$ is cyclic of order coprime with $n$. There is no universal grading.
\end{lem}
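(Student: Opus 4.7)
The plan is to exploit the one-dimensionality of the morphism spaces of $\T_{n,q}$ together with the homogeneity of $\Delta$ to pin down the degrees of the generators, then use connectedness at the basepoint to constrain the grading group, and finally rule out a universal Hopf grading by exhibiting infinitely many pairwise non-isomorphic Hopf gradings.

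Since each morphism space $\li{t^{i+1}}{(\T_{n,q})}_{t^i}=ka_{t^i}$ is one-dimensional, the generator $a_{t^i}$ is automatically homogeneous; denote its degree by $d_i\in\Gamma_X$. The identities $t^j$ sit inside $k^{C_n}\subset\oplus\T_{n,q}$, so by the very definition of a Hopf grading they carry trivial degree. Applying the homogeneity of $\Delta$ to
\[ {}_{t^{i+1}}\Delta_{t^i}(a_{t^i})=\sum_{t^jt^k=t^i}\bigl(t^j\otimes a_{t^k}+q^ka_{t^j}\otimes t^k\bigr), \]
each pair $(j,k)$ contributes two summands sitting in distinct components of the target direct sum. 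For the pair $(j,k)=(0,i)$, the summand $q^i a_1\otimes t^i$ has degree $d_0$ whereas $t^0\otimes a_{t^i}$ has degree $d_i$; both must equal $d_i$. Varying $i$ forces all $d_i$ to coincide with a common value $d\in\Gamma_X$.

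Any homogeneous walk at the basepoint $1$ is a concatenation of virtual morphisms $(a_{t^?},\pm 1)$, each contributing $d^{\pm 1}$; hence $\Gamma_X=\langle d\rangle$ is cyclic. A walk returning from $1$ to $1$ must have net signed length divisible by $n$ in order to close up around the crown, so its degree lies in $\langle d^n\rangle$. Connectedness of $X$ at the basepoint forces $\langle d^n\rangle=\langle d\rangle$, which in a cyclic group is equivalent to $\gcd(n,|\Gamma_X|)=1$; in particular $\Gamma_X$ is finite of order coprime with $n$.

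Finally, for each integer $m\ge 1$ coprime with $n$, I will check that the assignment $\deg(a_{t^i})=\bar 1\in\ZZ/m\ZZ$ defines a connected Hopf grading $X_m$ of $\T_{n,q}$. Homogeneity of $\Delta$ was established above, $\epsilon$ is homogeneous for trivial reasons since it vanishes on every $a_{t^?}$, and the antipode is homogeneous once one confirms the closed-form expression $S(a_{t^j})=-q^{j+1}a_{t^{n-j-1}}$, which I intend to extract by applying the antipode axiom to $a_{t^{n-1}}$ and solving the resulting triangular linear system in the scalars $\lambda_j$ satisfying $S(a_{t^j})=\lambda_j a_{t^{n-j-1}}$. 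A universal Hopf grading $V$ would admit a surjection $\Gamma_V\twoheadrightarrow\ZZ/m\ZZ$ for every such $m$, forcing $|\Gamma_V|$ to be divisible by infinitely many distinct positive integers, contradicting finiteness of $\Gamma_V$. The main technical point is thus the derivation of the closed form for $S$ on the generators, everything else being essentially formal.
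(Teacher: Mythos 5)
Your argument is correct and follows the paper's proof essentially verbatim: homogeneity of $\Delta$ applied to the summand $q^i a_1\otimes t^i$ forces all generators to share one degree $d$, the closed walk around the crown together with connectedness gives $\langle d^n\rangle=\langle d\rangle$ and hence cyclicity and coprimality with $n$, and the gradings $X_m$ (left implicit in the paper's ``the last assertion follows'') rule out a universal Hopf grading since its group would have to be finite by the first part of the lemma yet surject onto $\mathds{Z}/m\mathds{Z}$ for infinitely many $m$. The only superfluous step is the closed form for $S$: since $S$ is a contravariant functor with $S(t^j)=t^{-j}$, it sends the one-dimensional space $ka_{t^j}$ into the one-dimensional space $ka_{t^{n-j-1}}$, so its homogeneity is automatic and no triangular linear system needs to be solved.
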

\noindent\textbf{Proof. }\sf
The comultiplication is homogeneous with respect to $X$. The  formula of Example \ref{taft category} for the comultiplication in a Taft category implies that all the generators of the category have the same degree that we call $\gamma$. Using $w$ of the preceding proof, we obtain that $\deg_X w= \gamma^n$. Note that $\gamma^n$ is a generator
of $\Gamma_X$, which is thus of some finite order $m$ coprime with $n$. The last assertion follows from this. \qed

We compute now the fundamental group of the Taft $k$-categories. Since only cyclic groups $C_m$ of order $m$ coprime with $n$ can occur as groups of connected Hopf gradings of  this category, the next result follows immediately.
\begin{pro}\label{inverse limit}
The fundamental group of the $n$-th Taft category $\T_{n,q}$ is the subgroup of $\prod_{(n,m)=1}\mathds{Z}/m\mathds{Z}$ consisting of families $(x_m)$ verifying $\mu(x_m)=x_{m'}$, where
$\mu:\mathds{Z}/m\mathds{Z}\to \mathds{Z}/m'\mathds{Z} $
is the canonical projection in case $m'$ divides $m$.

 $$\Pi_1^{\mathsf H}(\T_{n,q}) = \varprojlim_{(m,n)=1}\mathds{Z}/m\mathds{Z}.$$

\end{pro}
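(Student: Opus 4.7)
My plan is to use the preceding lemma together with the explicit structure of the connected Hopf gradings of $\T_{n,q}$ in order to identify the indexing poset of the fundamental group with the collection of cyclic groups of order coprime to $n$, ordered by divisibility, and then to recognise the compatibility relations as those of the canonical projections.

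First, by the preceding lemma, the connected Hopf gradings of $\T_{n,q}$ have cyclic grading groups $\mathds{Z}/m\mathds{Z}$ with $\gcd(m,n)=1$, and in any such grading $X$ all generating morphisms $a_{t^i}$ share a common degree $\gamma_X$, which in fact must be a generator of $\mathds{Z}/m\mathds{Z}$ (otherwise $\gamma_X^n$ could not generate, contradicting connectedness). Next I would exhibit, for each $m$ coprime to $n$, a connected Hopf grading $X_m$ with grading group $\mathds{Z}/m\mathds{Z}$: assign every generator $a_{t^i}$ the degree $1 \in \mathds{Z}/m\mathds{Z}$ and every identity morphism degree $0$; a direct check using the formula of Example \ref{taft category} shows that the comultiplication is homogeneous, and the Hopf ideal $I$ is generated by length-$n$ compositions, which are homogeneous of degree $n$, so the grading descends to $\T_{n,q}$.

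The main step is to describe the morphisms of connected Hopf gradings $\mu : X_m \to X_{m'}$. By Definition \ref{morphism Hopf gradings} such a $\mu$ requires a homogeneous Hopf endofunctor $J$ of $\T_{n,q}$ that is the identity on objects. Since the morphism spaces of $\T_{n,q}$ are at most one-dimensional, $J$ acts by scalars $\lambda_i \in k^{*}$ on the generators $a_{t^i}$, and the compatibility with the comultiplication in Example \ref{taft category} (applied to the terms $t^j \otimes a_{t^k}$ with $k=0$ and the $a_{t^j}\otimes t^k$ with $j=0$) forces all $\lambda_i$ to coincide. Consequently $HW(J)$ preserves the degree of every $X_m$-homogeneous walk, so the commuting diagram in Definition \ref{morphism Hopf gradings} reduces to the condition $\mu(1) = 1$, i.e.\ $\mu$ is the canonical projection (which exists precisely when $m'\mid m$).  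By Remark \ref{surjectiveHopf} these are the only morphisms between $X_m$ and $X_{m'}$, and the hard part of the argument is this rigidity of $J$, since one must check that no other Hopf endofunctor produces extra morphisms of gradings.

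Finally, Definition \ref{hopffundamental} expresses $\Pi_1^{\mathsf H}(\T_{n,q})$ as the set of families $(x_m)_{(n,m)=1}$ with $x_m \in \mathds{Z}/m\mathds{Z}$ satisfying $\mu(x_m) = x_{m'}$ whenever $m' \mid m$, with pointwise product. This is by definition the inverse limit $\varprojlim_{(m,n)=1}\mathds{Z}/m\mathds{Z}$, and the proposition follows.
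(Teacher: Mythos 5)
Your proposal is correct and follows the same route as the paper, which deduces the result from the preceding lemma on connected Hopf gradings of $\T_{n,q}$; you usefully make explicit two steps the paper leaves implicit, namely the construction of a connected Hopf grading by $\mathds{Z}/m\mathds{Z}$ for every $m$ coprime to $n$ and the identification of the morphisms of gradings with the canonical projections for $m'\mid m$. The only quibble is that the rigidity of $J$ (all scalars $\lambda_i$ equal) is not actually needed: since $J$ is the identity on objects and acts by nonzero scalars on the one-dimensional morphism spaces, $HW(J)$ automatically preserves degrees of homogeneous walks, so the commutative diagram already pins down $\mu$.
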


Next we  consider the morphism $\tau$ of Theorem \ref{themorphism} for an $n$-th Taft category. In this situation the space of homomorphisms between two objects are either zero or one dimensional, those categories are called \emph{Schurian}. In \cite{CRS DOC 11} a $CW$-complex is associated to such categories, its fundamental group from algebraic topology is isomorphic to the fundamental group of the $k$-category. In case of an $n$-th Taft category it is straightforward to infer that $\Pi_1(\T_{n,q},1)$ is isomorphic to $\mathds{Z}$. Moreover, since the $k$-category is Schurian,  there is a universal covering, that is a universal grading by $\mathds{Z}$. Note that the structure of $\T_{n,q}$ as a  $k$-category does not depend on $q$, this element of $k$ is only involved in the Hopf structure of the $k$-category. Consequently the fundamental group of the underlying $k$-category does not depend on $n$.

\begin{pro}
There is a commutative diagram

\begin{displaymath}
    \xymatrix{
        \Pi_1(\T_{n,q},1) \ar[r]^\tau \ar[d] & \Pi^{\mathsf H}_1(\T_{n,q}) \ar[d] \\
        \mathds{Z}  \ar[r]  & \varprojlim_{(m,n)=1}\mathds{Z}/m\mathds{Z}}
\end{displaymath}
where the left vertical arrow is the isomorphism obtained from \cite{CRS DOC 11}, the right vertical arrow is the isomorphism of Proposition \ref{inverse limit}, and the bottom arrow is induced by the canonical projections  to the quotients $\mathds{Z}/m\mathds{Z}$ for $(m,n)=1$. This last map is injective.

\end{pro}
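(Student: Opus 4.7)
The plan is to make both vertical identifications explicit via the distinguished loop $w=(a_{t^{n-1}},1)\cdots(a_t,1)(a_1,1)$ based at $1$, so that commutativity of the square reduces to matching the normalizations of the two isomorphisms, and then to derive injectivity of the bottom arrow by an elementary divisibility argument. The principal technical point is aligning these two normalizations; everything else is formal.

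First, I would pin down the left vertical arrow. Since $\T_{n,q}$ is Schurian, its associated CW-complex is homotopy equivalent to the $n$-cycle, whose ordinary topological fundamental group is free cyclic on the class $[w]$. The isomorphism $\Pi_1(\T_{n,q},1)\cong\mathds{Z}$ of \cite{CRS DOC 11} therefore sends $[w]$ to $1$, so an element $k\in\mathds{Z}$ corresponds to the compatible family $\{\gamma_Y\}$ with $\gamma_Y=\mu_Y(k)$, where $\mu_Y\colon\mathds{Z}\to\Gamma_Y$ is the unique morphism of connected gradings. Next, for the right vertical arrow, the preceding lemma shows that every Hopf connected grading $X_m$ of $\T_{n,q}$ has group $C_m$ with $(m,n)=1$, and that the generators $a_{t^i}$ share a common $X_m$-degree $\gamma_m$ with $\deg_{X_m}(w)=\gamma_m^n$ a distinguished generator of $C_m$. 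The identification $C_m\cong\mathds{Z}/m\mathds{Z}$ implicit in Proposition \ref{inverse limit} is the one sending $\deg_{X_m}(w)$ to $1$; this is the choice that turns the transition morphisms between the different grading groups into the canonical projections referred to in that statement.

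With both identifications controlled by $w$, commutativity is immediate: starting from $k\in\mathds{Z}$, the right-then-down path yields the family whose value at $X_m$ is $\mu_{X_m}(k)=(\deg_{X_m}(w))^k$, which under the identification of Proposition \ref{inverse limit} is $k\bmod m$; the down-then-right path produces the same family $(k\bmod m)_m$. Finally, for injectivity of the bottom map $\mathds{Z}\to\varprojlim_{(m,n)=1}\mathds{Z}/m\mathds{Z}$, if $k$ lies in the kernel then $p\mid k$ for every prime $p\nmid n$, and since $n$ has only finitely many prime divisors this forces $k=0$.
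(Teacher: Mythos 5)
Your proposal is correct and, for the only part the paper actually argues (injectivity of the bottom map), uses the same argument: infinitely many primes fail to divide $n$, each must divide any element of the kernel, forcing it to be zero. The commutativity of the square, which the paper treats as immediate from the definitions of $\tau$ and the two identifications, you verify explicitly by normalizing both vertical isomorphisms via the walk $w$; this is a sound and welcome elaboration rather than a different route.
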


\begin{proof}
There are infinitely many primes  not dividing $n$. Hence if $x$ is in the kernel there are infinitely many primes dividing $x$, so $x=0$.
\qed
\end{proof}

\section{\sf Hopf algebra of $k$-valued maps over a cyclic finite group}

In this section we consider the Hopf algebra $k^{C_n}$ as a Hopf $k$-category with a single object in order to compute its fundamental group.

The next table summarises our results for an algebraic closed field $k$ of characteristic different from $2$ and $3$. In fact, the computations bellow provide results for any field, but  under the previous assumption they can be presented in a uniform way.
\normalsize

\begin{center}
\renewcommand{\arraystretch}{1.8}
\begin{tabular}{|c|c|c|c|c|c|c|}
  \hline
  n & 2 & 3 & 4 & 5 & 6 & 7 \\
  \hline
  $\Pi_1^{\mathsf H}(k^{C_n})$ & $0$ & $C_2$ & $C_2$ & $C_4$ & $C_2$ & $C_6$ \\
  \hline
\end{tabular}
\end{center}

\medskip

Let $k^E$ be the $k$-algebra of maps over a finite set $E$. Its connected gradings have been classified by S. D{\u{a}}sc{\u{a}}lescu, see \cite{dasca}, see also \cite{ginsch}. However the maps between those gradings are intricate, see  \cite{CRS ANT 10}. At the opposite, we will show that for $n$ at most $7$, the list of connected Hopf gradings  is short and  the morphisms between them can be described.

Recall that a central homogeneous idempotent enables to decompose a grading as follows.

\begin{lem}
Let $B$ and $C$ be $k$-algebras with gradings $X$ and $Y$ by the same group $\Gamma$. The algebra $B\times C$ is also graded by $\Gamma$.
\end{lem}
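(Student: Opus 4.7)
The plan is to define the grading on $B\times C$ componentwise from the given gradings, and then verify the two defining axioms of a $\Gamma$-grading (direct sum decomposition and multiplicativity). Concretely, for each $s\in\Gamma$ I would set
\[ (B\times C)^s = X^sB \times Y^sC \subset B\times C.\]

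The first step is to check that $B\times C = \bigoplus_{s\in\Gamma}(B\times C)^s$ as $k$-vector spaces. This is immediate from the hypotheses: the direct sum decompositions $B=\bigoplus_s X^sB$ and $Y=\bigoplus_s Y^sC$ combine, under the product vector space structure on $B\times C$, to give the claimed decomposition, since the product of two internal direct sums is the internal direct sum of the componentwise products.

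The second step is to verify multiplicativity, that is $(B\times C)^t (B\times C)^s \subset (B\times C)^{ts}$. Since the multiplication in $B\times C$ is componentwise, we have
\[ (X^tB\times Y^tC)(X^sB\times Y^sC) = (X^tB)(X^sB)\times (Y^tC)(Y^sC) \subset X^{ts}B\times Y^{ts}C, \]
where both inclusions use the multiplicativity of the gradings $X$ on $B$ and $Y$ on $C$ respectively. This gives exactly the homogeneous component of degree $ts$ in $B\times C$.

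Since both checks are essentially bookkeeping once the componentwise definition is set up, I do not expect any genuine obstacle here; the content of the lemma is just that product $k$-algebras inherit a grading from their factors, provided the factors are graded by the same group. The only point worth being careful about is keeping the two gradings $X$ and $Y$ notationally distinct, so that one does not accidentally identify homogeneous components from $B$ with those from $C$.
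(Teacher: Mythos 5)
Your proposal is correct and matches the paper's proof exactly: the paper also defines $Z^s(B\times C)=X^sB\times Y^sC$ and leaves the (routine) verification implicit, which you have simply written out. Nothing further is needed.
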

\begin{proof}
Consider the grading $Z$ given for $s\in\Gamma$, by $Z^s(B\times C)=X^sB\times Y^sC$.\qed
\end{proof}

\begin{lem}\label{gr-decomposition}
Let $A$ be a $k$-algebra with a grading $X$. Let $e$ be a central homogeneous idempotent of $A$. The grading $X$ induces a grading on the algebra $Ae$ with unit $e$. Moreover $A=Ae\times A(1-e)$ as graded algebras.
\end{lem}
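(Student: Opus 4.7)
\medskip

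\noindent\textbf{Proof plan.}\sf{ The whole statement should reduce to one elementary observation: any nonzero homogeneous idempotent in a graded algebra must have degree equal to the neutral element of the grading group. Indeed, if $e$ is homogeneous of some degree $g\in\Gamma$, then $e^2$ is homogeneous of degree $g^2$; since $e=e^2\neq 0$ and the decomposition of an element into homogeneous components is unique, $g^2=g$, and in the group $\Gamma$ this forces $g=1$. The same argument applies to $1-e$, which is also idempotent, central, and homogeneous (its only possibly nonzero homogeneous component is in degree $1$, because $e$ itself sits in degree $1$).

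Once $e$ has trivial degree, I would define the grading on $Ae$ by setting
\[ X^s(Ae) = (X^s A)\, e \]
for each $s\in\Gamma$. The inclusions $X^s(Ae)\subset X^s A$ give a direct sum decomposition of $Ae$, and the product is compatible with degrees because $(X^t A\, e)(X^s A\, e)\subset X^{ts}A\, e$, using centrality and idempotency of $e$. The same construction works for $A(1-e)$ with unit $1-e$.

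Next I would check that $A=Ae\times A(1-e)$ already as graded algebras: for any homogeneous $a\in X^sA$, the identity $a=ae+a(1-e)$ expresses $a$ as the sum of an element of $X^s(Ae)$ and an element of $X^s(A(1-e))$, and this sum is direct since $e(1-e)=0$. Hence the grading $X$ coincides, factor by factor, with the product grading of the preceding lemma applied to $B=Ae$ and $C=A(1-e)$, which completes the proof.

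I do not anticipate a real obstacle here: the only point that could be overlooked is the preliminary remark that forces $e$ to be of degree $1_\Gamma$, since without it the induced grading on $Ae$ would not be well-defined in a way compatible with the decomposition into $Ae\times A(1-e)$.\qed}
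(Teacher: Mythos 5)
Your proposal is correct and follows essentially the same route as the paper: note that $\deg_X(e)=1$, define the induced grading by $X^s(Ae)=(X^sA)e$, and observe that $X^sA=(X^sA)e\oplus(X^sA)(1-e)$ gives the graded product decomposition. You merely supply the details (the argument $g^2=g\Rightarrow g=1$ for the degree of a homogeneous idempotent, and the compatibility of the product) that the paper leaves as "easily verified".
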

\begin{proof}
Note first that $\deg_X(e) =1$. We consider a decomposition of $Ae$ as a direct sum of the vector spaces  $(X_e)^s(Ae):=(X^sA)e$ where $s\in\Gamma_X$. Observe that $X^sA=  (X^sA)e\oplus (X^sA)(1-e)$ and $(X^sA)e=Ae\cap X^sA$. Moreover it is easily verified that $X_e$ is a grading of the algebra $Ae$ and the conclusion follows.\qed
\end{proof}

We recall or establish some facts concerning gradings of commutative semisimple algebras.

Consider on one hand the category of $k$-algebras of the form $k\times\cdots\times k$ and on the other hand the category of finite sets. These categories are isomorphic through  canonical contravariant functors: to a set $E$ we associate the $k$-algebra $k^E$, while to a $k$-algebra $k\times\cdots\times k$ we associate its complete set of primitive central idempotents $\left\{(1,0,\dots,0),\dots , (0,\dots,0,1)\right\}$.

\begin{pro}
Let $E$ be a finite set and let $A=k^E$. The subalgebras of $A$ are in one-to-one correspondence with the partitions of $E$.  The elements of the  subalgebra corresponding to a partition $P$ are the $k$-valued maps over $E$ which are constant on the subsets of the partition. This subalgebra is isomorphic to $k^P$.
\end{pro}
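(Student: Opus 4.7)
My plan is to set up maps in both directions between partitions of $E$ and subalgebras of $A=k^E$ (all subalgebras containing $1_A$), and to verify that they are mutually inverse.

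For the direction from partitions to subalgebras, I would take a partition $P=\{C_1,\dots,C_r\}$ and define $B_P\subseteq A$ to be the set of maps $f:E\to k$ that are constant on each block $C_i$. Closure under sum, scalar multiplication, pointwise product and the presence of $1$ are immediate, so $B_P$ is a subalgebra. The $k$-linear map $B_P\to k^P$ sending $f$ to the function on $P$ whose value on $C_i$ is the common value of $f$ on $C_i$ is then a unital algebra isomorphism; injectivity is clear, and matching dimensions ($|P|$ on both sides) gives surjectivity.

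For the direction from subalgebras to partitions, given a subalgebra $B\subseteq A$, I would extract a partition from the primitive idempotent decomposition of $B$. Since $A=k^E$ is a product of fields it is reduced, hence so is the finite-dimensional commutative subalgebra $B$; the structure theorem for commutative Artinian rings then yields a decomposition $B=e_1B\oplus\cdots\oplus e_rB$ where $e_1,\dots,e_r$ are orthogonal primitive idempotents summing to $1_A$. Each $e_i$ is in particular an idempotent of $A=k^E$, hence the characteristic function of a subset $C_i\subseteq E$; the relations $e_ie_j=0$ for $i\ne j$ and $\sum_ie_i=1$ translate into $\{C_1,\dots,C_r\}$ being a partition of $E$, which I take as $P_B$.

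The crux is to show $B=B_{P_B}$. Each summand $e_iB$ is a commutative finite-dimensional $k$-algebra with no nontrivial idempotents (primitivity of $e_i$), and is reduced (being a subalgebra of $A$), hence a finite field extension $K_i$ of $k$. On the other hand $e_iB$ embeds unitally in $e_iA\cong k^{C_i}$, and for any $x\in C_i$ evaluation at $x$ yields a unital $k$-algebra map $K_i\to k$, which is injective because $K_i$ is a field; combined with the inclusion $k\subseteq K_i$ this forces $K_i=k$. Thus $e_iB=ke_i$ and $B=\bigoplus_{i=1}^r ke_i$ is exactly the $k$-span of the characteristic functions of the blocks of $P_B$, i.e. $B=B_{P_B}$. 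Conversely, starting from a partition $P$, the primitive idempotents of $B_P$ are visibly the characteristic functions of the blocks, giving $P_{B_P}=P$, so the two constructions are inverse. I expect the one real point of substance to be the field-extension argument eliminating phantom components $K_i\neq k$; everything else is bookkeeping with idempotents.
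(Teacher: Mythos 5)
Your proof is correct. The paper disposes of this in two lines by invoking the contravariant isomorphism it has just set up between finite sets and algebras of the form $k\times\cdots\times k$: a unital subalgebra $B\subseteq k^E$ corresponds to a surjection out of $E$, and a surjection out of $E$ is the same thing as a partition of $E$ (its fibers). You instead argue directly with idempotents. The mathematical content is the same in the end --- in both cases the partition is read off from the complete set of primitive idempotents of $B$, equivalently from the fibers of the associated surjection --- but your version makes explicit the one step the paper's one-liner glosses over: why a unital subalgebra of $k^E$ is necessarily again split diagonal, i.e.\ why no component of $B$ can be a proper field extension $K_i\supsetneq k$. Your evaluation-at-a-point argument (a unital $k$-algebra map from a field $K_i$ into $k$ must be injective, forcing $K_i=k$) is exactly the right way to close that gap, and it is needed for the paper's claim that subalgebras of $k^E$ are objects of the stated category in the first place. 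So your route is more elementary and self-contained; the paper's buys brevity by packaging the bookkeeping into the duality of categories it uses elsewhere. No gaps.
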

\begin{proof}
Subalgebras correspond to surjective maps between finite sets in the above contravariant isomorphism of categories. Observe that a surjective map is just a partition of the source set provided by the fibers of the map.
\qed
\end{proof}

\begin{lem}
Let $E$ be a finite set and let $A=k^E$. The idempotents of $A$ are in one-to-one correspondence with the subsets of $E$.
\end{lem}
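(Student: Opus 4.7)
The plan is to use the concrete description of $A = k^E$ as the algebra of $k$-valued functions on $E$ under pointwise operations. First I would recall that an element $e \in k^E$ is a function $e: E \to k$ and that the product in $k^E$ is pointwise, so the idempotent condition $e^2 = e$ is equivalent to the pointwise condition $e(x)^2 = e(x)$ for every $x \in E$.

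Next, since $k$ is a field, the polynomial $T^2 - T = T(T-1)$ has only the roots $0$ and $1$ in $k$, so each $e(x)$ lies in $\{0, 1\}$. This means that an idempotent $e$ is nothing other than the characteristic (indicator) function of the subset
\[ S_e := \{x \in E : e(x) = 1\} \subseteq E. \]

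Then I would exhibit the two mutually inverse maps explicitly. Define $\Phi: \mathcal{P}(E) \to \{\text{idempotents of } A\}$ by $\Phi(S) = \mathbf{1}_S$, the characteristic function of $S$, which is clearly idempotent; and define $\Psi: \{\text{idempotents of } A\} \to \mathcal{P}(E)$ by $\Psi(e) = S_e$ as above. The preceding observation shows $\Phi(\Psi(e)) = e$, while $\Psi(\Phi(S)) = S$ is immediate from the definition of the characteristic function.

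There is no real obstacle here: the only point to notice is that the argument uses that $k$ is a field (so that $T^2 = T$ forces $T \in \{0,1\}$), which is part of the standing hypothesis. Everything else is a direct unwinding of the pointwise algebra structure on $k^E$.
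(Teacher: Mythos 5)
Your proof is correct and is essentially the paper's own argument: the paper simply associates to a subset $F$ its characteristic map $\delta_F$ and to an idempotent its support, which are exactly your maps $\Phi$ and $\Psi$. You merely make explicit the (correct) justification that $e(x)^2=e(x)$ forces $e(x)\in\{0,1\}$ because $k$ is a field, a detail the paper leaves tacit.
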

\begin{proof}
 To a subset $F$ of $E$ we associate the $k$-valued idempotent map $\delta_F$ which is the characteristic map of $F$. Conversely to an idempotent map $e$ we associate its support. \qed
\end{proof}

\begin{defi} (see \cite{dasca})
Let $A$ be an algebra. A grading $X$ of $A$ is \emph{ergodic} if $X^1A$ is one dimensional.
\end{defi}

\begin{thm}
Let $E$ be a finite set and let $X$ be a grading of $k^E$. There exists a unique decomposition of $k^E$ as a product of algebras with ergodic gradings.

Moreover, there exists a unique partition $P_X$ of $E$ such that the ergodic graded algebras involved in the decomposition are the algebras $k^F=k^E\delta_F$, where $F$ is an element of $P_X$.
\end{thm}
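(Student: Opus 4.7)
The plan is to extract the partition $P_X$ directly from the degree-$1$ component $X^1 A$, then decompose $A = k^E$ via Lemma \ref{gr-decomposition} and check that each factor is ergodic and unique.

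First I would note that $X^1 A$ is a unital subalgebra of $A = k^E$. By the proposition identifying subalgebras of $k^E$ with partitions of $E$, there is a unique partition $P_X$ such that $X^1 A$ consists exactly of the $k$-valued maps on $E$ which are constant on each block of $P_X$. In particular, for every $F \in P_X$ the characteristic function $\delta_F$ lies in $X^1 A$ and the family $\{\delta_F\}_{F \in P_X}$ is a complete set of orthogonal idempotents of $A$ sitting inside $X^1 A$. Being central in the commutative algebra $A$, each $\delta_F$ is a \emph{homogeneous central idempotent of degree $1$}, which is exactly the hypothesis needed to feed into Lemma \ref{gr-decomposition}.

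Next, iterating Lemma \ref{gr-decomposition} over the finite collection $P_X$, I obtain a decomposition of graded algebras
\[ A \;=\; \prod_{F \in P_X} A\delta_F \;=\; \prod_{F \in P_X} k^F, \]
where each $k^F$ is endowed with the induced grading. I would then verify that this induced grading is ergodic: the degree-$1$ component of $k^F = A\delta_F$ is $(X^1 A)\delta_F$, and since $X^1 A$ is the algebra of functions constant on the blocks of $P_X$, multiplying by $\delta_F$ cuts off everything outside $F$ and forces constancy on $F$; the result is $k\,\delta_F$, which is one-dimensional. Hence each factor is ergodic.

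For the uniqueness statement, suppose $A = \prod_{i \in I} A_i$ is any decomposition as a product of graded algebras in which each $A_i$ carries an ergodic grading. The unit $e_i$ of $A_i$ is a homogeneous central idempotent of degree $1$ in $A$, the $e_i$'s are orthogonal and sum to $1_A$, and $X^1 A = \prod_i X^1 A_i$ is spanned by the $e_i$'s because each $X^1 A_i = k\,e_i$. Since $e_i$ is an idempotent of $k^E$, the previous lemma gives $e_i = \delta_{F_i}$ for a unique subset $F_i \subseteq E$, and the $F_i$'s form a partition of $E$. Because $X^1 A$ coincides with the linear span of the $\delta_{F_i}$'s, it equals the algebra of functions constant on the blocks of $\{F_i\}$; by the uniqueness in the subalgebra/partition correspondence, this forces $\{F_i\}_{i \in I} = P_X$, and hence $A_i = A\delta_{F_i} = k^{F_i}$ as graded algebras. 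The only delicate point is making sure that the induced grading on $A\delta_F$ used in Lemma \ref{gr-decomposition} really coincides with the one $A_i$ carries in any alternative ergodic decomposition — this follows because the decomposition of $A$ into $\prod A\delta_{F_i}$ is determined by the central idempotents $\delta_{F_i}$, and homogeneity of these idempotents means the projection onto each factor is a graded algebra map.
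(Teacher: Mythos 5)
Your proposal is correct and follows essentially the same route as the paper: read off $P_X$ from the subalgebra $X^1(k^E)$ via the subalgebra/partition correspondence, feed the homogeneous central idempotents $\delta_F$ into Lemma \ref{gr-decomposition}, and verify ergodicity from $(X^1 k^E)\delta_F = k\delta_F$. The only difference is that you spell out the uniqueness of the ergodic decomposition explicitly (via the units $e_i$ of the factors being degree-$1$ idempotents spanning $X^1$), which the paper leaves implicit; that addition is sound.
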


\begin{proof}
According to the previous results, the subalgebra of homogeneous elements of trivial $X$-degree of $k^E$ determines a partition $P_X$ of $E$; this subalgebra is $k^{P_X}$, that is the $k$-valued maps on $E$ which are constant on the sets of $P_X$. In other words
$$X^1(k^E) = \bigtimes_{F\in P_X} k\delta_F.$$

This subalgebra has a complete system of central primitive idempotents
  $$\{\delta_F\}_{F\in P_X}.$$
In turn and according to Lemma \ref{gr-decomposition}, each algebra $k^E\delta_F$ is graded, and there is an isomorphism of graded algebras:
$$k^E = \bigtimes_{F\in P_X} k^E\delta_F.$$
The elements of the algebra $k^E\delta_F$ are the maps which support is contained in $F$, that is $k^F$, with unit element $\delta_F$. Its  subalgebra of homogeneous elements of degree $1$ is one dimensional: $$X_{\delta_F}^1(k^E\delta_F) = (X^1k^E)\delta_F= k\delta_F.$$
\qed
\end{proof}

\begin{thm}\label{superergodic} (see also \cite{dasca})
Let $F$ be a finite set with a fixed element $u_0$, and let $X$ be a grading of $k^F$ by a group $\Gamma_X$. If $X$ is ergodic, then in each non zero homogeneous component $X^s(k^F)$ there exists a unique homogeneous \emph{normalised} map $a_s$ of degree $s$, that is verifying $a_s(u_0)=1$. Moreover $a_s$ is invertible, of finite order, and the homogeneous component is one dimensional.

If  $X$ is connected, then the family $\{a_s\}_{s\in \Gamma_X}$ is a multiplicative basis of $k^F$. This provides an isomorphism of $k^F$ with the group algebra $k\Gamma_X$.
\end{thm}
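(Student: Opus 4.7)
The plan is to exploit two facts about $k^F$: it is reduced (no non-zero nilpotents, being a product of copies of $k$), and the ergodic condition $X^1(k^F)=k\cdot 1$ tightly constrains products of homogeneous elements. Together these should force every non-zero homogeneous component to be one dimensional and spanned by an invertible element of finite order.

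First I would show that every $s$ in the support of $X$ has finite order and that any non-zero homogeneous $b \in X^s(k^F)$ is invertible. Since $k^F$ is finite dimensional, the sequence $b,b^2,b^3,\ldots$ satisfies a non-trivial linear relation; but every $b^n$ is non-zero because $k^F$ is reduced, so the degrees $s,s^2,\ldots$ cannot all be pairwise distinct, forcing $s^m=1$ for some $m\ge 1$. Writing $n := \mathrm{ord}(s)$, the element $b^n$ lies in $X^1(k^F)=k\cdot 1$ and is non-zero, hence $b^n = c\cdot 1$ with $c \in k^\times$, which exhibits an inverse of $b$ inside $k^F$. In particular $b(u_0)\neq 0$, so $a_s := b/b(u_0)$ is a normalised homogeneous element of degree $s$ with $a_s^n=1$ (evaluate at $u_0$). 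For uniqueness and one-dimensionality, if $a$ and $b$ are two non-zero homogeneous elements of degree $s$, then $a\cdot b^{-1}$ belongs to $X^s(k^F)\cdot X^{s^{-1}}(k^F) \subseteq X^1(k^F)=k$, so $a$ is a scalar multiple of $b$; the normalisation condition then pins down $a_s$ uniquely, establishing the first half of the statement.

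For the connected case, the key observation is that the support $S := \{s \in \Gamma_X \mid X^s(k^F)\neq 0\}$ is a subgroup of $\Gamma_X$: it is stable under inversion since $a_s^{-1} \in X^{s^{-1}}(k^F)$ is non-zero, and stable under multiplication since the product $a_s a_t$ of two invertibles is a non-zero element of $X^{st}(k^F)$. Connectedness forces $S$ to generate $\Gamma_X$, hence $S=\Gamma_X$; in particular $\Gamma_X$ is finite, of cardinality $|F|$ by dimension counting against the one-dimensional components. Finally, $a_s a_t$ lies in $k\cdot a_{st}$, and evaluating at $u_0$ gives $a_s a_t = a_{st}$, so $\{a_s\}_{s\in\Gamma_X}$ is a multiplicative basis of $k^F$, and $s\mapsto a_s$ is the desired algebra isomorphism $k\Gamma_X \cong k^F$.

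The only mildly delicate point I foresee is the very first step, where one must combine the absence of nilpotents with finite dimensionality to force $s$ to have finite order; once this is in hand, the ergodic hypothesis does all the remaining work through the simple device of dividing two homogeneous elements of the same degree to land in $X^1(k^F)=k$.
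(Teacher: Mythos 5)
Your proposal is correct and follows essentially the same route as the paper: finite-dimensionality plus the absence of nilpotents in $k^F$ forces some power of a non-zero homogeneous element into $X^1(k^F)=k\cdot 1$, yielding invertibility, finite order of the degree, one-dimensionality of the components, and (via connectedness) that the support is a subgroup equal to $\Gamma_X$. The only cosmetic difference is that the paper deduces one-dimensionality from the non-invertibility of the difference $a-b$ of two normalised elements, whereas you divide and land in $X^1(k^F)=k$; both are valid.
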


Before proving this result, we recall the following.

\begin{lem}
Let $A$ be a finite dimensional $k$-algebra with a grading $X$. Given $a\in A$ a homogeneous and non nilpotent element, there exists a positive integer $n$ such that $a^n$ is of trivial degree.
\end{lem}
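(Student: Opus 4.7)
The plan is to reduce the statement to showing that the degree $s := \deg_X(a) \in \Gamma_X$ must have finite order in $\Gamma_X$; once this is established, taking $n$ to be the order of $s$ gives $\deg_X(a^n) = s^n = 1$, which is what we want.

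First I would record the elementary observation that if $a$ is homogeneous of degree $s$, then for every positive integer $n$ the power $a^n$ is homogeneous of degree $s^n$, because the grading is compatible with the multiplication: $(X^sA)(X^sA) \subset X^{s^2}A$, and inductively $a^n \in X^{s^n}A$. Moreover, since $a$ is assumed non-nilpotent, $a^n \neq 0$ for every $n \geq 1$.

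The core of the argument is then a finite-dimensionality pigeonhole. Suppose for contradiction that $s$ has infinite order in $\Gamma_X$. Then the elements $s, s^2, s^3, \ldots$ of $\Gamma_X$ are pairwise distinct, so the nonzero elements $a, a^2, a^3, \ldots$ lie in pairwise distinct homogeneous components $X^{s^n}A$. Because the decomposition $A = \bigoplus_{t \in \Gamma_X} X^t A$ is a direct sum, any finite family of nonzero elements belonging to distinct components is linearly independent. This produces an infinite linearly independent family inside $A$, contradicting $\dim_k A < \infty$.

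Therefore $s$ has some finite order $n$ in $\Gamma_X$, and with this $n$ the element $a^n$ is nonzero and lies in $X^{1}A$, as required. I expect no real obstacle here: the whole argument is a two-line pigeonhole, and the only point worth emphasising is that the contradiction in the infinite-order case uses non-nilpotence (to guarantee the $a^n$ are all nonzero) together with the direct-sum property of the grading (to guarantee linear independence across distinct components).
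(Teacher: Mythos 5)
Your proof is correct and follows essentially the same route as the paper: both arguments exploit finite-dimensionality to force a linear dependence among the powers $a^i$, and then use the fact that nonzero homogeneous elements of pairwise distinct degrees are linearly independent to conclude that $\deg_X(a)^i=\deg_X(a)^j$ for some $i\neq j$. The paper extracts a minimal-length dependence relation while you argue by contradiction with the infinite-order assumption, but these are two phrasings of the same pigeonhole.
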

\begin{proof}
  Note that for all $i$ we have $a^i \neq 0$ and $\deg_Xa^i= \left(\deg_Xa\right)^i$. The set $\{a^i\}_{i\geq0}$ is contained  in the finite dimensional vector space $A$, hence there exists a non trivial dependence relation between its elements. We consider a dependence relation of minimal length - which is at least $2$. All the powers of $a$ affording non zero coefficients in this relation have the same $X$-degree, in particular there exist $i\neq j$ such that $ \left(\deg_Xa\right)^i = \left(\deg_Xa\right)^j$, so there exists $n$ such that $\left(\deg_Xa\right)^n=1$.
\qed
\end{proof}

\noindent\textbf{Proof ot Theorem \ref{superergodic}.} There are no nilpotent elements in $k^F$. According to the Lemma above, for every non zero homogeneous element $a$ there exists $n$ such that $a^n\in X^1(k^F)=k$. This implies that $a$ is invertible, meaning that  $a(x)\neq 0$ for every $x\in F$. We normalise $a$ by considering $a'=a(u_0)^{-1}a$. Since $a'$ is homogeneous of the same degree than $a$, we infer that $(a')^n\in X^1(k^F)$. This means that all the coefficients of $(a')^n$ are equal, then equal to $1$ since we already know that $(a')^n(u_0)=1$.

Let $a$ and $b$ be homogeneous normalised elements of  the same degree, their difference is homogeneous of the same degree. However $a-b$ is not invertible since $(a-b)(u_0)=0$, hence $a-b=0$.

Finally we consider the support of the grading $X$, that is the set of $s \in \Gamma_X$ such that $X^s(k^F)\neq 0$. We assert that it coincides with $\Gamma_X$. Since $X$ is connected, its support generates $\Gamma_X$ (see Lemma \ref{support}). The previous considerations prove that the support is in fact already a subgroup of  $\Gamma_X$, then equal to $\Gamma_X$.
\qed

We summarise the results above as follows. Let $X$ be a grading of $k^E$, where $E$ is a finite set. There exists a canonical partition $P_X$ of $E$ which is determined by the subalgebra of homogeneous elements of trivial degree. A complete set of idempotents is inferred, namely the characteristic maps of the sets of the partition. In turn, this provides a decomposition of $k^E$ as a product of graded ergodic algebras  $k^F$, where $F$ varies along the subsets of $E$ given by the partition $P_X$, and $k^F$ is identified to the algebra of $k$-valued maps on $E$ which support is contained in $F$. The dimension of $k^F$ is the cardinality of $F$. Since the grading of $k^F$ is ergodic, all its non zero homogeneous components are one dimensional.

\medskip

In particular a grading $X$ is trivial (that is $k^E = X^1(k^E)$) if and only if its associated  partition is the finest one:  $P_X=\{\{x\ \}\}_{x\in E}$.

\medskip

Let $C_n= \langle t\mid t^n=1\rangle $ be the cyclic group of order $n$ with a given generator $t$. The $k$-algebra $k^{C_n}$ is a Hopf $k$-algebra, see Remark \ref{dualofagroup}.

\begin{lem}
Let $X$ be a Hopf grading of $k^{C_n}$. The corresponding partition $P_X$ of $C_n$ is preserved under the antipode $S$, \emph{i.e.} if $F\in P_X$ then $S(F)\in P_X$.
\end{lem}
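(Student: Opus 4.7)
The plan is to characterise $P_X$ via the primitive idempotents of the degree--one subalgebra $X^1(k^{C_n})$ and then transport the antipode through this characterisation. Recall from the structure theorem preceding this lemma that $P_X$ is the unique partition of $C_n$ for which $X^1(k^{C_n})$ coincides with the subalgebra $k^{P_X}$ of $k^{C_n}$, namely the algebra of maps constant on the blocks; equivalently, the complete set of primitive idempotents of $X^1(k^{C_n})$ is precisely $\{\delta_F\}_{F\in P_X}$, where $\delta_F=\sum_{s\in F}\delta_s$.

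The first step is to show that $S$ restricts to an algebra automorphism of $X^1(k^{C_n})$. Since $X$ is a Hopf grading the antipode is homogeneous, and in the standard convention for an abelian grading group this means $S\left(X^g(k^{C_n})\right)\subseteq X^{g^{-1}}(k^{C_n})$; in particular $S$ preserves $X^1(k^{C_n})$. Commutativity of $k^{C_n}$ forces $S$ to be an algebra (not merely anti-algebra) morphism, and the standard identity $S^2=\id$ for commutative Hopf algebras shows that $S$ is an involution. Its restriction to $X^1(k^{C_n})$ is therefore an involutive algebra automorphism of a commutative semisimple algebra, which must permute the finite set of primitive idempotents $\{\delta_F\}_{F\in P_X}$.

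It remains to identify $S(\delta_F)$ explicitly. Combining Remark \ref{dualofagroup}, which gives $S(\delta_s)=\delta_{s^{-1}}$ for every $s\in C_n$, with the decomposition $\delta_F=\sum_{s\in F}\delta_s$, one computes $S(\delta_F)=\sum_{s\in F}\delta_{s^{-1}}=\delta_{S(F)}$. Since $S(\delta_F)$ must equal $\delta_{F'}$ for some $F'\in P_X$, we conclude $S(F)\in P_X$. The only genuinely delicate point is the first one, namely unfolding the abstract hypothesis that ``$S$ is homogeneous'' into the concrete inclusion $S\left(X^1(k^{C_n})\right)\subseteq X^1(k^{C_n})$; once this is in place the remainder of the argument is a short structural verification via primitive idempotents.
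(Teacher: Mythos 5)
Your proposal is correct and follows essentially the same route as the paper: homogeneity of the antipode shows $S$ preserves $X^1(k^{C_n})$, whose unique complete set of primitive orthogonal idempotents is $\{\delta_F\}_{F\in P_X}$, and therefore $S$ permutes these idempotents and hence the blocks of the partition. The only difference is that you spell out the identification $S(\delta_F)=\delta_{S(F)}$ explicitly, which the paper leaves implicit.
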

\begin{proof}
The antipode of $k^{C_n}$ is homogeneous, in particular it preserves the subalgebra $X^1( k^{C_n})$. The set of characteristic maps of the subsets of the partition is the complete set of orthogonal primitive idempotents of this algebra. Note that this set is unique. Consequently $S$ preserves this set, hence it preserves the partition.
\end{proof}

\begin{lem}
Let $X$ be a Hopf grading of $k^{G}$, where $G$ is a finite group.  Any homogeneous element $a$ of non trivial degree verifies $a(1)=0$.
\end{lem}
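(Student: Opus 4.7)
The plan is to derive this directly from the homogeneity of the counit, which is part of the definition of a Hopf grading.

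First I would recall from Remark \ref{dualofagroup} that the counit of the Hopf algebra $k^G$ is precisely evaluation at the identity of $G$, that is $\epsilon(a) = a(1)$ for every $a \in k^G$. The desired statement $a(1) = 0$ is therefore equivalent to $\epsilon(a) = 0$.

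Next I would invoke the definition of a Hopf grading: the counit $\epsilon : k^G \to k$ must be a homogeneous map with respect to $X$ and the trivial grading of the ground field. Since the base field $k$, viewed as a graded $k$-algebra, is concentrated in the neutral degree $1 \in \Gamma_X$ (any non-trivial homogeneous component would have to be a proper ideal of the field $k$, hence zero), homogeneity of $\epsilon$ forces $\epsilon\bigl(X^s(k^G)\bigr) = 0$ for every $s \in \Gamma_X$ with $s \neq 1$.

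Combining the two observations, for a homogeneous element $a$ of non-trivial degree we have $a(1) = \epsilon(a) = 0$, which is the claim. There is no real obstacle here: the lemma is a tautological unfolding of the definitions, and its role in the section is simply to record a useful vanishing property that will be exploited in subsequent arguments about partitions of $C_n$ induced by Hopf gradings.
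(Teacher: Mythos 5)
Your proof is correct and follows exactly the same route as the paper's: both deduce $a(1)=\epsilon(a)=0$ from the homogeneity of the counit required by the definition of a Hopf grading. You simply spell out the (correct) intermediate observation that $k$ is concentrated in the trivial degree, which the paper leaves implicit.
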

\begin{proof}
The counit $\epsilon : k^G\to k$ is homogeneous, in particular $\epsilon(a)=0$. Note that $\epsilon(a)=a(1).$\qed
\end{proof}
\begin{pro}\label{one}
Let $X$ be a Hopf grading of $k^{C_n}$ and let $P_X$ be the corresponding partition of $C_n$. The singleton $\{1\}$, where $1$ is the trivial element of $C_n$, is one of  the sets of the partition.
\end{pro}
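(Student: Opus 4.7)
The plan is to exploit the fact that $\delta_1$ is a left integral of the finite-dimensional Hopf algebra $H=k^{C_n}$. From the relations $\delta_g^2=\delta_g$ and $\delta_g\delta_h=0$ for $g\neq h$, one checks directly that $a\delta_1=a(1)\delta_1=\epsilon(a)\delta_1$ for every $a\in H$, so $\delta_1$ is a left integral; the same computation applied to a general $\Lambda=\sum_g\lambda_g\delta_g$ satisfying $h\Lambda=\epsilon(h)\Lambda$ for all $h\in H$ forces $\lambda_g=0$ for $g\neq 1$, so the space of left integrals of $H$ equals $k\delta_1$ and is one-dimensional.

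The next step is to show that any left integral is $X$-homogeneous, whence $\delta_1$ itself is homogeneous. Let $\Lambda=\sum_{s\in\Gamma_X}\Lambda_s$ be the homogeneous decomposition of a left integral. For $h\in X^tH$, homogeneity of multiplication gives $h\Lambda_s\in X^{ts}H$, while homogeneity of the counit gives $\epsilon(h)=0$ when $t\neq 1$. Comparing the identity $h\Lambda=\epsilon(h)\Lambda$ degree by degree in $\Gamma_X$ yields $h\Lambda_s=\epsilon(h)\Lambda_s$ for every $s$ and every homogeneous $h$, hence for every $h\in H$. Therefore each $\Lambda_s$ is itself a left integral, and one-dimensionality of the integral space forces all but one $\Lambda_s$ to vanish. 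Applied to $\Lambda=\delta_1$, this says $\delta_1\in X^sH$ for a unique $s\in\Gamma_X$; the preceding lemma, together with $\delta_1(1)=1\neq 0$, rules out every $s\neq 1$, so $\delta_1\in X^1H$.

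To finish, the structural theorem for gradings of $k^E$ identifies $X^1(k^{C_n})$ with the subalgebra $k^{P_X}$ of maps constant on the parts of $P_X$. Since $\delta_1\in k^{P_X}$ must be constant on the part $F_0\in P_X$ containing $1$, and $\delta_1$ takes value $1$ at $1$ and $0$ elsewhere, we are forced to have $F_0=\{1\}$, as desired. I expect the only delicate point to be the degree-by-degree decomposition showing that the homogeneous components of an integral are again integrals; every other ingredient is immediate from the definitions or from results already established in the paper.
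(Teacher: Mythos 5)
Your proof is correct, but it follows a genuinely different route from the paper's. The paper disposes of the statement in two lines using the machinery it has just built for gradings of $k^E$: if the part $F_1$ of $P_X$ containing $1$ had at least two elements, the ergodic factor $k^{F_1}$ would contain a nonzero homogeneous element $h$ of nontrivial degree; by Theorem \ref{superergodic} such an $h$ is invertible, so $h(1)\neq 0$, contradicting the immediately preceding lemma that homogeneous elements of nontrivial degree vanish at $1$. You instead identify $\delta_1$ as spanning the (one-dimensional) space of left integrals of $k^{C_n}$, show that the homogeneous components of a left integral are again left integrals (the degree-by-degree comparison is fine: for homogeneous $h$ of degree $t\neq 1$ one has $\epsilon(h)=0$ and the summands $h\Lambda_s$ lie in distinct components $X^{ts}H$, so each vanishes), conclude that $\delta_1$ is homogeneous, pin its degree to $1$ via the same counit lemma, and finish using $X^1(k^{C_n})=k^{P_X}$. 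Both arguments are complete and both ultimately rest on homogeneity of the counit. The paper's is shorter given Theorem \ref{superergodic}, which it needs anyway for the subsequent computations; yours replaces the invertibility argument by a general Hopf-algebraic principle --- under any Hopf grading the line of integrals of a finite-dimensional Hopf algebra is homogeneous of trivial degree --- which is of independent interest and applies verbatim to $k^G$ for an arbitrary finite group $G$ without invoking ergodicity.
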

\begin{proof}
Let $F_1$ be the set of the partition containing $1$. If $|F_1| \geq 2$, then the dimension of $k^{F_1}$ is at least $2$. Since $k^{F_1}$ is ergodic, there exists a homogeneous component of non trivial degree. Let $h$ be a non zero element in this component. Then $h$ is invertible by Proposition \ref{superergodic} and in particular $h(1)\neq 0$, which contradicts the previous result. \qed
\end{proof}

We are now in position to describe the connected Hopf gradings of $k^{C_n}$, for $n\leq 7.$

\begin{pro}
For any field $k$, the fundamental group of the Hopf algebra $k^{C_2}$ is trivial.
\end{pro}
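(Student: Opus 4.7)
The plan is to use Proposition \ref{one} together with the smallness of $C_2$ to show that the only Hopf grading of $k^{C_2}$ is the trivial one, whence the fundamental group must be trivial.

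First I would invoke Proposition \ref{one}: any Hopf grading $X$ of $k^{C_2}$ induces a partition $P_X$ of $C_2 = \{1,t\}$ which must contain $\{1\}$ as one of its blocks. Since $C_2$ has only two elements, the only such partition is the finest one, $P_X = \{\{1\},\{t\}\}$. By the structure theorem for gradings of $k^E$ proved earlier in this section, this forces a decomposition of graded algebras
\[ k^{C_2} = k^{\{1\}} \times k^{\{t\}} = k\delta_1 \times k\delta_t, \]
where each factor carries an ergodic grading and is one-dimensional.

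Next I would observe that on a one-dimensional algebra $k$, any ergodic grading is necessarily trivial: every non-zero element is homogeneous of degree $1$, since $X^1$ already exhausts the whole algebra. Consequently every homogeneous component $X^s(k^{C_2})$ with $s \neq 1$ is zero, so the support of $X$ is $\{1\}$. By Lemma \ref{support} (applied in each single-object factor, or equivalently to $k^{C_2}$ itself viewed as a single-object $k$-category), the connectedness of $X$ forces $\Gamma_X$ to be generated by $\{1\}$, hence $\Gamma_X$ is trivial.

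Therefore the only connected Hopf grading of $k^{C_2}$ is the trivial grading by the trivial group, and Definition \ref{hopffundamental} immediately yields $\Pi_1^{\mathsf H}(k^{C_2}) = 0$. There is no real obstacle here; the argument rests entirely on the fact that $|C_2|=2$ leaves no room for a non-trivial partition once $\{1\}$ is forced to be a block, so the interesting content is already packaged in Proposition \ref{one} and Theorem \ref{superergodic}.
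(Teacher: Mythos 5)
Your argument is correct and follows the same route as the paper: invoke Proposition \ref{one} to force $\{1\}$ to be a block, observe that this leaves only the finest partition of $C_2$, and conclude that the grading is trivial. The extra detail you supply (connectedness via Lemma \ref{support} forcing $\Gamma_X$ to be trivial) is exactly what the paper compresses into its remark that the finest partition corresponds to the trivial grading.
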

\begin{proof}
Let $X$ be a Hopf grading of $k^{C_2}$. According to the previous proposition, the corresponding partition includes $\{1\}$, hence $P_X$ is the finest partition of $C_2$. We have  already noticed that this partition corresponds to the trivial grading.
\end{proof}

\begin{pro}\label{C3}
For any field $k$ of characteristic different from $2$, there is a unique and hence universal non trivial connected Hopf grading of $k^{C_3}$ by the group $C_2$. Consequently $\Pi_1^{\mathsf H}(k^{C_3}) = C_2.$
In characteristic $2$ this fundamental group is trivial.
\end{pro}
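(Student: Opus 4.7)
The plan is to combine Proposition \ref{one} with Theorem \ref{superergodic} to enumerate all possible connected Hopf gradings. By Proposition \ref{one}, the partition $P_X$ associated to any connected Hopf grading $X$ of $k^{C_3}$ must contain the singleton block $\{1\}$; since $|C_3|=3$, the only possibilities are the discrete partition (giving the trivial grading) and $P=\{\{1\},\{t,t^2\}\}$.

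For the partition $P$, Lemma \ref{gr-decomposition} furnishes a graded decomposition $k^{C_3}=k\delta_1\times k^{\{t,t^2\}}$ in which the second factor is $2$-dimensional and carries an ergodic grading. Theorem \ref{superergodic} identifies this factor with the group algebra of its structure group, forcing $\Gamma_X$ to have order $2$, hence $\Gamma_X=C_2$, and supplies a unique normalised homogeneous generator $a$ of non-trivial degree. A short calculation — writing $a=\alpha\delta_t+\beta\delta_{t^2}$, imposing $a^2\in k(\delta_t+\delta_{t^2})$, and discarding the option $\beta=\alpha$ (which returns the unit of $k^{\{t,t^2\}}$, necessarily of degree $0$) — pins down $a=\delta_t-\delta_{t^2}$ up to normalisation. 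This already yields uniqueness of the grading when it exists.

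Next I would verify that the decomposition $X^0=\langle\delta_1,\,\delta_t+\delta_{t^2}\rangle$, $X^1=\langle a\rangle$ is a genuine Hopf grading in characteristic $\neq 2$. The counit vanishes on $a$ since $a(1)=0$; the antipode sends $a$ to $-a$ because $S$ swaps $\delta_t$ and $\delta_{t^2}$, so it preserves $X^1$; and the comultiplication $\Delta(a)$, after expressing $\delta_t$ and $\delta_{t^2}$ in the basis $\{\delta_1,\,\delta_t+\delta_{t^2},\,a\}$, is seen to lie in $X^0\otimes X^1+X^1\otimes X^0$. This expansion requires $2$ to be invertible. In characteristic $2$, by contrast, $\delta_t-\delta_{t^2}=\delta_t+\delta_{t^2}$ coincides with the unit of $k^{\{t,t^2\}}$ and is therefore homogeneous of degree $0$; no non-trivial ergodic grading of $k^{\{t,t^2\}}$ can exist, so only the trivial connected Hopf grading survives.

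Finally I would conclude through Proposition \ref{universal}. In characteristic $\neq 2$ the $C_2$-grading $V$ constructed above is universal: for any connected Hopf grading $Y$, either $Y=V$ and the unique morphism is the identity $C_2\to C_2$, or $Y$ is the trivial grading and the unique morphism is the surjection $C_2\to\{1\}$. Hence $\Pi_1^{\mathsf H}(k^{C_3})\cong\Gamma_V=C_2$. In characteristic $2$, only the trivial grading remains, and therefore $\Pi_1^{\mathsf H}(k^{C_3})=0$. The main obstacle I expect is keeping the comultiplication verification clean and correctly isolating the place where the hypothesis $\mathrm{char}\,k\neq 2$ actually enters.
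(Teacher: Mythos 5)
Your argument is correct and follows essentially the same route as the paper: Proposition \ref{one} forces the partition $\{\{1\},\{t,t^2\}\}$, the ergodic factor $k^{\{t,t^2\}}$ yields the normalised homogeneous element $\delta_t-\delta_{t^2}$ of degree $s$ with $s^2=1$ (the paper derives $x^2=1$ from invariance under the antipode, you from $h^2\in X^1$ --- both valid), and characteristic $2$ kills the nontrivial grading because $\delta_t-\delta_{t^2}$ degenerates to the unit of that factor. The only real difference is cosmetic: you check homogeneity of $\Delta$ directly in $k^{C_3}\otimes k^{C_3}$ (where you should also record the degree-zero checks for $\Delta(\delta_1)$ and $\Delta(\delta_t+\delta_{t^2})$, not just $\Delta(a)$), whereas the paper dualises and verifies that the corresponding decomposition of $kC_3$ is an algebra grading via the matrix of the homogeneous basis.
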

\begin{proof}
Our purpose is to describe the non trivial connected Hopf gradings of $k^{C_3}$. Let $X$ be such a Hopf grading.  We know that $P_X$ is not the finest partition and that $\{1\}$ is one of its subsets. Consequently  $P_X= \{ \{1\}, F\}$ where $F=\{t, t^2\}$. Observe that this partition is preserved by $S$, hence $k^{C_3}$ is the product $k\times k^F$ of ergodic graded algebras.

Since $k^F$ is ergodic, there exists a homogeneous element $h$ of non trivial degree $s$, which is then invertible. We normalise at $t$, obtaining  $h=\delta_t + x\delta_{t^2}$ with $x\neq 0$. Observe that the homogeneous component of degree $s$ is one dimensional in $k^F$ and in $k^E$, hence it has to be preserved by $S$. Since $S(h)=x\delta_t + \delta_{t^2}$, we infer $x^2=1$.

Of course $\delta_F$ and $h$ are linearly independent, so $x=1$ is not possible. If the characteristic of $k$ is $2$, then there are no Hopf gradings of $k^{C_3}$.

If the characteristic of $k$ is not $2$, consider $h=\delta_t - \delta_{t^2}$ which is homogeneous of degree $s$, and note that $s^2=1$.

 The comultiplication of $k^{C_3}$ has to be homogeneous with respect to $X$. Equivalently  we consider $Y$, the dual direct sum decomposition of $kC_3$ deduced from $X$, in order to see if it is a grading of $kC_3$ or not. For this purpose we search for a basis adapted to the dual decomposition $Y$. As a pattern for future reference, we next  provide the details of the method we use for obtaining such a basis.

 First we consider the matrix $M$ of the homogeneous basis vectors $\{\delta_1, \delta_F, h\}$ expressed in terms of the Dirac masses:
$$M= \left(
       \begin{array}{rrr}
         1 & 0 & 0 \\
         0 & 1 & 1 \\
         0 & 1 & -1 \\
       \end{array}
     \right)
     .$$
The corresponding basis for the dual decomposition, expressed in the dual basis, namely in the elements of the group, is given by the matrix
$$\left(M^{-1}\right)^t=
\left(
       \begin{array}{rrr}
       1 &0 &0\\
       0& 1/2 &1/2 \\
       0 &1/2 & -1/2\\
  \end{array}
     \right).
$$
Observe that  in this case the vectors $\{\delta_1, \delta_F, h\}$  are  orthogonal with respect to the canonical  inner product in the Dirac masses. Then, the inverse of each block of $M$ equals its transpose up to a non zero element of the field.

We obtain a basis of $kC_3$ adapted to the direct sum decomposition $Y$ where $\{1, t+t^2\}\subset Y^1(kC_3)$ and $\{t-t^2\}\subset Y^s(kC_3)$.  It is immediate to check that $Y^1(kC_3)$ is a subalgebra. Moreover
$$Y^1(kC_3) Y^s(kC_3) \subset Y^s(kC_3) \mbox{ and } \left(Y^s(kC_3)\right)^2 \subset Y^1(kC_3).$$

Hence there exists a unique non trivial connected grading of $k^{C_3}$ which is universal. Proposition \ref{universal} provides the result.\qed
\end{proof}

\begin{pro}
  If the characteristic of $k$ is different from $2$, there is a unique non trivial connected Hopf grading of $k^{C_4}$ and $\Pi_1^{\mathsf H}(k^{C_4}) = C_2$ . In characteristic $2$ this fundamental group is trivial.
\end{pro}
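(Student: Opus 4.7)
The plan is to mimic the argument of Proposition \ref{C3}, enumerating the admissible partitions $P_X$ of $C_4$ associated to a connected Hopf grading $X$ of $k^{C_4}$, then for each such partition determining the grading on the ergodic blocks and checking compatibility with the comultiplication. By Proposition \ref{one} the singleton $\{1\}$ must lie in $P_X$, and by the preceding lemma the partition is preserved by the antipode $S$, which in $C_4$ acts by $t \leftrightarrow t^3$ and fixes $t^2$. This leaves three admissible partitions of $C_4\setminus\{1\}$: the finest (which yields the trivial grading), $P_A=\{\{t,t^3\},\{t^2\}\}$, and $P_B=\{\{t,t^2,t^3\}\}$.

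For $P_A$, set $F=\{t,t^3\}$. Since $k^F$ is two-dimensional and ergodic, Theorem \ref{superergodic} forces $\Gamma_X\cong C_2$. Normalising at $t$, the non-trivial homogeneous component is spanned by $h=\delta_t+x\delta_{t^3}$. Imposing that $S$ preserves the one-dimensional component of non-trivial degree, the computation $S(h)=\delta_{t^3}+x\delta_t\in kh$ yields $x^2=1$; excluding $x=1$ (which would give $h=\delta_F$ of trivial degree) forces $x=-1$ and requires the characteristic of $k$ to differ from $2$. To confirm this is a genuine Hopf grading I would, as in Proposition \ref{C3}, compute the dual decomposition $Y$ of $kC_4$,
\[ Y^1=\mbox{span}\{1,\ t+t^3,\ t^2\},\qquad Y^s=\mbox{span}\{t-t^3\}, \]
and check directly that $Y^aY^b\subset Y^{ab}$ via the handful of products $(t-t^3)^2=-2+2t^2$, $(t+t^3)(t-t^3)=0$, $t^2(t-t^3)=-(t-t^3)$, together with the trivial checks within $Y^1$.

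The case $P_B$ is where I expect the main obstruction. With $F=\{t,t^2,t^3\}$ the algebra $k^F$ is three-dimensional and ergodic, so $\Gamma_X\cong C_3$ by Theorem \ref{superergodic}. A normalised generator $a_s=\delta_t+x\delta_{t^2}+y\delta_{t^3}$ of $X^s(k^F)$ satisfies $a_s^3=a_1=\delta_F$, giving $x^3=y^3=1$; linear independence of $\{a_1,a_s,a_{s^2}\}$ in the three-dimensional space $k^F$ then forces $\{x,y\}=\{\omega,\omega^2\}$ for a primitive cube root of unity $\omega\in k$. The obstruction appears when testing homogeneity of the comultiplication by the dual-basis method of Proposition \ref{C3}: the generator $e_s$ of $Y^s\subset kC_4$ is, up to scalar, $t+\omega^2 t^2+\omega t^3$, and squaring it contributes a nonzero constant term proportional to $3\omega$. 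Since $e_s^2$ must lie in $Y^{s^2}$, which has trivial intersection with $k\cdot 1\subset Y^1$, the grading fails to be a Hopf grading unless $3\omega=0$; as a primitive cube root of unity cannot exist in characteristic $3$, this rules out $P_B$ in every characteristic.

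Combining the two cases: in characteristic different from $2$ the partition $P_A$ produces the unique non-trivial connected Hopf grading, which is therefore universal, and Proposition \ref{universal} yields $\Pi_1^{\mathsf H}(k^{C_4})\cong C_2$. In characteristic $2$ the analysis of $P_A$ collapses (the relation $-1=1$ forces $h=\delta_F$) while $P_B$ is still excluded, so only the trivial connected Hopf grading exists and the fundamental group is trivial.
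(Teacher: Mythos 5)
Your argument is correct, and its overall skeleton --- enumerating the $S$-stable partitions of $C_4$ containing the singleton $\{1\}$, analysing the ergodic block for the partition $A=\{\{1\},\{t^2\},\{t,t^3\}\}$, and certifying the comultiplication by checking that the dual decomposition of $kC_4$ is a grading --- is the same as the paper's. The genuine divergence is in how you exclude the partition $B=\{\{1\},\{t,t^2,t^3\}\}$. The paper kills it with the antipode alone: writing $h=\delta_t+x\delta_{t^2}+y\delta_{t^3}$, the condition $S(h)=yh$ gives $y^2=1$ and $yx=x$, hence $y=1$, so $\delta_F$, $h$ and $h^2$ all have the form $\delta_t+c\,\delta_{t^2}+\delta_{t^3}$ and are linearly dependent, contradicting ergodicity of a three-dimensional block --- two lines, no roots of unity needed. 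You instead never invoke the antipode in this case: you pin the candidate down completely ($x,y$ distinct primitive cube roots of unity, which already disposes of fields lacking such roots, in particular characteristic $3$) and then exhibit the obstruction in the comultiplication, namely that the square of the dual generator $t+\omega^2t^2+\omega t^3$ has constant term $3\omega\neq 0$ and so cannot lie in $Y^{s^2}$. Both routes are sound. The paper's is shorter; yours costs an explicit dual-basis computation but buys two things: it does not depend on the precise homogeneity convention for $S$ (whether $S$ preserves $X^s$ or maps it to $X^{s^{-1}}$ is immaterial for $A$, where the group is $C_2$, but is a real issue for $B$, where the putative group is $C_3$), and it shows the obstruction already lives at the coalgebra level. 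Your treatment of partition $A$, including the products $(t-t^3)^2=-2+2t^2$ and $t^2(t-t^3)=-(t-t^3)$, and the final appeal to Proposition \ref{universal} via uniqueness of the non-trivial grading, match the paper.
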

\begin{proof}
We first provide the list of  partitions of $C_4$ which are preserved by the antipode $S$, excluding the finest one. According to Lemma  \ref{one}, the subset $\{1\}$ has to be a subset of each partition.
    \begin{itemize}
    \item $A=\{ \{1\}, \{t^2\}, \{t, t^3\} \},$
    \item $B= \{ \{1\},  \{t, t^2, t^3\} \}.$
    \end{itemize}

For $A$, the graded decomposition of a possible Hopf grading $X$ of $k^{C_4}$ is $$k\delta_1 \times k\delta_{t^2} \times k\delta_F$$ where $F=\{t, t^3\}$. Concerning the ergodic grading of $k\delta_F$, the situation is similar to the previous one, it should exist  an invertible element $h=\delta_{t}+x\delta_{t^3} \in k^F$, homogeneous of non trivial degree $s$ with $s^2=1$. Since $S$ has to preserve $kh$, we obtain $x^2=1$. In characteristic $2$, we have $h=\delta_F$ which is impossible and there is no grading corresponding to this partition. Otherwise, with $x=-1$ we notice that the basis vectors $\{\delta_1, \delta_{t^2}, \delta_F, h\}$ are orthogonal with respect to the inner product in the Dirac masses. Let $Y$ be the dual decomposition of $kC_4$. According to the method of the preceding proof, we obtain $$Y^1 (kC_4) = k\oplus kt^2 \oplus k(t+t^3) \mbox{ and } Y^s(kC_4)=k(t-t^3).$$ One can verify that this decomposition provides a grading of the algebra $kC_4$.

\medskip
 For $B$,  the graded decomposition of a possible Hopf grading  of $k^{C_4}$ is $$k\delta_1  \times k\delta_F$$ where $F=\{t,t^2, t^3\}$. The grading induced on $k^F$ is ergodic, let $h=\delta_t+x\delta_{t^2}+y\delta_{t^3}$ be homogeneous of degree $s$.  The element $h$ is invertible in $k^F$ and $S(h)$ is a scalar multiple of $h$. Now $S(h)=y\delta_t+x\delta_{t^2}+\delta{t^3}$, so $S(h)=yh$. Moreover $y^2=1$ and $yx=x$. We infer $y=1$, then $h^2=\delta_t+ x^2\delta{t^2}+\delta_{t^3}$. Hence  $\{\delta_F, h, h^2\}$ is linearly dependent and there is no Hopf grading corresponding to the partition $B$. \qed

\end{proof}

\begin{pro}
If $k$ contains a  root of unity of order $4$, then  there is a universal Hopf grading with group $C_4$ and $\Pi_1^{\mathsf H}(k^{C_5}) = C_4$.

If the characteristic is not $2$ and there is no root of unity of order $4$, there exists a unique non trivial  Hopf grading and this fundamental group is $C_2$.

In characteristic  $2$, this fundamental group is trivial.
\end{pro}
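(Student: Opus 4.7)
The plan is to mimic the strategy used for $k^{C_3}$ and $k^{C_4}$: enumerate the partitions of $C_5$ preserved by the antipode $S$ and containing $\{1\}$ as a block (via Proposition~\ref{one}), use Theorem~\ref{superergodic} to parameterise the ergodic gradings on each non-identity block, impose $S$-invariance of each one-dimensional homogeneous line to pin down the parameters, and finally verify that the dual direct sum decomposition on $kC_5$ is multiplicatively closed.

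Since $S(t^i)=t^{-i}$, beyond the finest partition the candidates are the coarsest partition $D=\{\{1\},\{t,t^2,t^3,t^4\}\}$, the $S$-orbit partition $A=\{\{1\},\{t,t^4\},\{t^2,t^3\}\}$, the two partial refinements of $A$ obtained by splitting one of its non-identity blocks, and the two ``crossed'' partitions $\{\{1\},\{t,t^2\},\{t^3,t^4\}\}$ and $\{\{1\},\{t,t^3\},\{t^2,t^4\}\}$. I expect the partial refinements and crossings to fail the multiplicativity test on $kC_5$: for instance, under $\{\{1\},\{t,t^4\},\{t^2\},\{t^3\}\}$ the two singletons force $t^2,t^3$ into $Y^1(kC_5)$, so $t^2\cdot t^2=t^4$ must also be of trivial degree, but expanding $t^4$ in the candidate dual basis shows a non-zero component along the candidate degree-$s$ line $k(t-t^4)$. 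Analogous computations should dispatch the other bad partitions, leaving only $A$ and $D$, in agreement with the subgroup lattice of $\aut(C_5)=(\ZZ/5)^{\ast}\cong C_4$.

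For $A$ the analysis runs in parallel to Proposition~\ref{C3} on each non-identity block $F_1=\{t,t^4\}$ and $F_2=\{t^2,t^3\}$, producing the unique normalised generators $h_1=\delta_t-\delta_{t^4}$ and $h_2=\delta_{t^2}-\delta_{t^3}$; a short computation of $(t-t^4)(t^2-t^3)$ in $kC_5$ forces $\deg h_1=\deg h_2=s$ with $s^2=1$, yielding a unique $C_2$-grading whenever $\mathrm{char}\,k\ne 2$. For $D$, Theorem~\ref{superergodic} and a dimension count give an ergodic grading of $k^F$ with cyclic group $C_4$, hence the need for a primitive $4$-th root of unity $i\in k$; writing $h=\delta_t+x\delta_{t^2}+y\delta_{t^3}+z\delta_{t^4}$ and imposing $h^4=\delta_F$ together with $S(h)\in kh$ pins down $(x,y,z)=(i,-i,-1)$ up to the choice of a generator of $C_4$, giving a unique connected Hopf grading up to isomorphism.

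When both gradings exist one verifies directly that $h=h_1+ih_2$ and $h^2=\delta_{F_1}-\delta_{F_2}\in Y^1$, so the canonical projection $C_4\twoheadrightarrow C_2$ is realised as a morphism of Hopf gradings from the $D$-grading to the $A$-grading; hence the $D$-grading is universal and Proposition~\ref{universal} gives $\Pi_1^{\mathsf H}(k^{C_5})\cong C_4$. Without a primitive $4$-th root but in characteristic $\ne 2$, only the $A$-grading survives, so it is universal and the fundamental group is $C_2$. In characteristic $2$, the constraint $x^2=1$ on the normalised generators collapses each $h_i$ to $\delta_{F_i}$ and only the trivial grading remains. The main obstacle will be the case-by-case elimination of the partitions refining or crossing $A$, as each requires an explicit linear-algebra computation in the five-dimensional group algebra $kC_5$ rather than yielding to a single structural argument.
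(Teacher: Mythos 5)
Your overall strategy is the paper's: list the $S$-stable partitions of $C_5$ having $\{1\}$ as a block (your six candidates are exactly the paper's $A$, $B$, $C$, $C'$, $D$, $D'$ up to relabelling), analyse each non-identity block via Theorem \ref{superergodic} and $S$-invariance of the one-dimensional homogeneous lines, and test whether the dual decomposition of $kC_5$ is multiplicatively closed. Your treatment of the partition $A$ (including the computation of $(t-t^4)(t^2-t^3)$ forcing $ss'=1$) and your elimination of the refined and crossed partitions agree with the paper.

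The genuine gap is in the coarsest partition $\{\{1\},\{t,t^2,t^3,t^4\}\}$. A dimension count only gives an ergodic grading group of \emph{order} $4$, not a cyclic one, and the constraints you impose do not pin down $(x,y,z)=(i,-i,-1)$: the relations $S(h)=zh$ (hence $y=zx$, $x=zy$, $z^2=1$) together with $h^4=\delta_F$ are also satisfied by the order-two solutions $h=\delta_t-\delta_{t^2}-\delta_{t^3}+\delta_{t^4}$ (with $z=1$) and $h=\delta_t\pm\delta_{t^2}\mp\delta_{t^3}-\delta_{t^4}$ (with $z=-1$, $x=\pm 1$). Over any field of characteristic $\neq 2$ these three elements together with $\delta_F$ are linearly independent, so they assemble into a candidate connected ergodic grading by $C_2\times C_2$; if it survived it would enlarge the fundamental group, and it also undermines your second clause, since it exists precisely when $k$ has no primitive fourth root of unity. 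The paper spends most of its effort here: a $z=1$ element alone cannot carry an ergodic grading of the four-dimensional block (its powers span a two-dimensional space), so some homogeneous element has $z=-1$ and order-two degree, and passing to the quotient of the grading group by that degree yields a grading whose associated partition is $D$ or $D'$ --- already excluded. (Alternatively, a direct computation such as $(t+t^2-t^3-t^4)^2=-4-t+3t^2+3t^3-t^4$, which is not homogeneous for the candidate dual decomposition, does the job.) Without this step neither the uniqueness of the $C_4$ grading nor the nonexistence of a nontrivial grading with the coarsest partition over fields lacking $i$ is established.
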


\begin{proof}
We proceed as in the previous proposition,  the list of possible partitions of $C_5$ for a Hopf grading of $k^{C_5}$ is as follows.
 \begin{itemize}
    \item $A=\{ \{1\}, \{t,t^4\}, \{t^2, t^3\} \}$ and we set $F=\{t,t^4\}$, $G=\{t^2, t^3\}$,
    \item $B= \{ \{1\},  \{t, t^2, t^3, t^4\}\}$ and we set $ H=\{t, t^2, t^3, t^4\}$,
    \item $C= \{ \{1\}, \{t\}, \{t^2, t^3\}, \{t^4\} \}$ and $C'$ obtained from $C$ with $t^2$ as a generator instead of $t$,
    \item $D= \{ \{1\}, \{t,t^2\}, \{t^3, t^4\} \}$ and $D'$ as above.
    \end{itemize}

We will prove the following:
\begin{enumerate}
\item If $k$ contains a root of unity of order $4$, then there are precisely two connected Hopf gradings: \\
- for the partition $A$ with group $C_2$,\\
- for the partition $B$ with group $C_4$.\\
We will observe that the first one is a quotient of the second one.

\item If $k$ does not contain a root of unity of order $4$ and is not of characteristic $2$, then there is one connected Hopf grading with corresponding partition $A$ and group $C_2$.

\item If $k$ is of characteristic $2$, there are no non trivial connected Hopf gradings.
\end{enumerate}
Let $X$ be a possible Hopf grading with partition $A$, the decomposition as graded algebra of $k^{C_5}$  is $$k\delta_1\times k\delta_F\times \delta_G.$$  The graded algebras $k^F$ and $k^G$ are ergodic, and each one is preserved by $S$. As before, if the characteristic of $k$ is $2$ then there is  no Hopf grading.

Otherwise let $f=\delta_t-\delta_{t^4}$ and $g=\delta_{t^2}-\delta_{t^3}$ be homogeneous elements of respective degrees $s$ and $s'$, both of them are of order $2$. Observe that all the vectors involved are orthogonal to each other with respect to the inner product in the Dirac masses. Hence we obtain a direct sum decomposition $Y$ of  $kC_5$:

$Y^1(kC_5)= k\oplus k(t+t^4) \oplus k(t^2+t^3)$

$Y^s(kC_5)=k(t-t^4)$

$Y^{s'}(kC_5)= k(t^2-t^3).$

Simple computations show that $Y^1(kC_5)$ is a subalgebra of $kC_5$. Moreover $Y^1Y^s=Y^s$ and $Y^1Y^{s'}=Y^{s'}$.  Finally, since $Y^{s}Y^{s'}\subset Y^1$ it follows that $ss'=1$, hence $s=s'$. This way we have confirmed the existence of a unique Hopf grading by $C_2$ of $k^{C_5}$ with partition $A$.

\medskip
For  $B$, we have $k^{C_5}=k\delta_1\times k\delta_H$. The graded $k$-algebra $k^H$ is ergodic of dimension $4$. Let $h$ be a homogeneous element of non trivial degree $s$, hence $h$ is invertible in $k^H$. We normalise $h$ at $t$, obtaining $h=\delta_t+ x\delta_{t^2}+ y\delta_{t^3}+ z\delta_{t^4}$. According to Lemma \ref{superergodic},  $h^4=\delta_F$. The homogeneous component containing  $h$ is one dimensional and the antipode preserves it. Since $S(h)(t)=z$, necessarily $S(h)=zh$. We infer $y=zx$, $x=zy$ and $z^2=1$.

 We first consider the case $z=1$, which implies $x=y$. If the order of $s$ is $4$, then the family $\{\delta_H, h, h^2, h^3\}$ is linearly dependent and there is no Hopf grading. The order of $s$ should be $2$, and $h= \delta_t - \delta_{t^2} - \delta_{t^3}+ \delta_{t^4}$. Consider the quotient of the group of this possible grading by the subgroup generated by $s$. In this new grading the element $h$ is of trivial degree, and the associated partition is no longer $B$ but $D$. We will prove below that there are no Hopf gradings with associated partition $D$.

We assume now that the characteristic of $k$ is not $2$, and $z=-1$. Then
 $h=\delta_t+ x\delta_{t^2} -x\delta_{t^3} - \delta_{t^4}$.  We assert that for $x=\pm 1$ there is  no associated Hopf grading. Indeed, in that case $h$ is homogeneous of degree $s$ of order $2$. As above  we consider a quotient of the group of the possible grading by the subgroup generated by $s$. In this new grading $h$ is of trivial degree and the partition $B$ becomes $D$. However there are no Hopf gradings with associated partition $D$, see below.

In the previous paragraph we covered the case $x=\pm 1$, enabling us to conclude that if there is no primitive root of unity of order $4$ in $k$ there are no connected Hopf gradings with partition $B$. Otherwise, let $x=i$ and note that $h$ and $h^3$ are not orthogonal. According to the method that we have considered in the proof of Proposition  \ref{C3}, we compute the inverse of the matrix of the vectors $\delta_F, h, h^2, h^3$ expressed in the basis of Dirac masses, which is as follows:

$$
1/4\left(
       \begin{array}{rrrrr}
       4 &0 &0 &0&0\\
       0& 1 &-i &i& -1\\
       0 &1 & -1&-1&1\\
       0&1&i&-i&-1\\
  \end{array}
     \right).
$$

The rows of this matrix provide a basis for the dual decomposition $Y$ of $kC_5$, for instance $Y^1= k\oplus k(t+t^2+t^3+t^4)$ and $Y^s= k(t-it^2+it^3-t^4)$. Several non difficult computations show that $Y$ is a grading of $kC_5$. This way we obtain a connected Hopf grading of $k^{C_5}$ with partition $B$ and group $C_4$.

Observe that  choosing $x=-i$, the Hopf grading obtained is isomorphic to the preceding one. Indeed, the isomorphism is given by the automorphism of $C_4$ which sends $s$ to $s^3$, while the automorphism $J$ is the identity (see Definition \ref{morphism Hopf gradings}).

\medskip
For  $C$,  the decomposition as graded algebras of $k^{C_5}$ is $$k\times k\delta_t \times \delta_{\{t^2, t^3\}} \times \delta_{t^4}.$$ If the characteristic is not $2$, let $h=\delta_{t^2}-\delta_{t^3}$ be the homogeneous element of non trivial degree of  $k^{\{t^2,t^3\}}$. The basis vectors are mutually orthogonal with respect to the inner product in the Dirac masses. We infer a basis for the dual decomposition $Y$ of $kC_5$, and it is easily checked that $Y^1$ is not a subalgebra. We have proved that there are no connected Hopf gradings with partition $C$.

\medskip
Finally for $D$, the homogeneous elements of non trivial   degrees of order $2$ are $h=\delta_t-\delta_{t^2}$ and $S(h)=\delta_{t^4}-\delta_{t^3}$. The involved vectors are mutually orthogonal. As before, the dual decomposition $Y$ of $kC_5$ is not a grading since $Y^1$ is not a subalgebra.

\medskip

\qed
\end{proof}

\begin{pro}
Let $k$ be a field containing a primitive root of unity of order $6$.  There exists a unique non trivial connected Hopf grading of $k^{C_6}$ by the group $C_2$ and $\Pi_1^{\mathsf H}(k^{C_6}) = C_2$ . In other cases this fundamental group is trivial.
\end{pro}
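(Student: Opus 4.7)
The plan is to continue the partition-based case analysis used for $C_3$, $C_4$, $C_5$. Since $S$ is inversion on $C_6$, it swaps $t\leftrightarrow t^5$ and $t^2\leftrightarrow t^4$ while fixing $t^3$; together with Proposition \ref{one}, this forces any admissible partition $P_X$ to contain $\{1\}$ as a singleton and to be $S$-invariant. The block of $t^3$ is consequently one of $\{t^3\}$, $\{t^3,t,t^5\}$, $\{t^3,t^2,t^4\}$ or $\{t^3,t,t^5,t^2,t^4\}$, and the remaining elements form an $S$-invariant subpartition; organising the enumeration in this way yields on the order of a dozen candidate partitions to check.

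For each candidate I would proceed exactly as in Proposition \ref{C3}. On every ergodic factor $k^F$ of dimension $d$ I pick a normalised homogeneous generator $h$ of non-trivial degree and impose $S(h)\in kh$, which produces polynomial relations on the scalar entries of $h$ together with an order condition $h^n=\delta_F$; by Theorem \ref{superergodic}, $n$ must equal the order $d$ of the subgroup of $\Gamma_X$ supported on that factor. Following the matrix method of Proposition \ref{C3}, I would then compute the dual basis of $kC_6$ by inverting the transition matrix from the Dirac basis to the homogeneous basis, and check that $Y^1$ is a subalgebra of $kC_6$ and $Y^s Y^{s'}\subseteq Y^{ss'}$ holds; this is the Hopf compatibility condition.

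I expect that, under the hypothesis that $k$ contains a primitive $6$-th root of unity, the only surviving non-trivial partition is $A=\{\{1\},\{t,t^5\},\{t^2,t^4\},\{t^3\}\}$, with $h_1=\delta_t-\delta_{t^5}$ and $h_2=\delta_{t^2}-\delta_{t^4}$. The Hopf compatibility with $\Delta$ should force $h_1$ and $h_2$ to share the same non-trivial degree $s$ of order $2$, since otherwise expanding $\Delta h_1$ in the homogeneous basis produces a cross term proportional to $h_2$ of the wrong degree. This gives $\Gamma_X=C_2$, and the dual decomposition $Y^1=k\langle 1, t^3, t+t^5, t^2+t^4\rangle$, $Y^s=k\langle t-t^5, t^2-t^4\rangle$ of $kC_6$ is directly verified to be a grading. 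The hard part will be the bookkeeping for the remaining candidates, particularly those with an ergodic factor of dimension $3$, $4$ or $5$, where the interplay of $S$-invariance, the forced order of $h$, and the available roots of unity must be tracked carefully to rule the candidate out.

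Since this $C_2$-grading is then the unique non-trivial connected Hopf grading of $k^{C_6}$, it is automatically universal, any morphism of connected Hopf gradings with it as source being determined by surjectivity (Remark \ref{surjectiveHopf}). Proposition \ref{universal} then yields $\Pi_1^{\mathsf H}(k^{C_6})=C_2$. For the remaining cases in which $k$ lacks a primitive $6$-th root of unity, the same case analysis shows that the arithmetic constraints from $S$-invariance, the order condition, and the Hopf compatibility with $\Delta$ cannot be simultaneously satisfied by any non-trivial partition, hence $\Pi_1^{\mathsf H}(k^{C_6})=0$.
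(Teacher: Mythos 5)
Your proposal follows essentially the same route as the paper: enumerate the $S$-stable partitions of $C_6$ containing $\{1\}$ as a singleton, analyse each ergodic factor via a normalised homogeneous element constrained by $S$-invariance and the order condition, test the dual decomposition of $kC_6$ by the matrix method, conclude that only the partition $\{\{1\},\{t,t^5\},\{t^2,t^4\},\{t^3\}\}$ survives with both generators forced to the same degree of order $2$ (the paper derives $s=s'$ from $Y^{s}Y^{s'}\subseteq Y^{1}$ exactly as you do), and then invoke uniqueness/universality together with Proposition \ref{universal}. Like the paper, you leave the elimination of the remaining partitions as asserted bookkeeping; note only that the construction for partition $A$ in fact uses nothing beyond $\mathrm{char}\,k\neq 2$, so the dichotomy with the primitive sixth root of unity should be read, as the paper indicates at the start of the section, as the hypothesis under which the results are stated uniformly rather than as the exact boundary your final paragraph suggests.
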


\begin{proof}
The partitions of $C_6$ which are preserved by $S$ are the following:
 \begin{itemize}
    \item $A=\{ \{1\}, \{t,t^5\}, \{t^2, t^4\}, \{{t^3}\} \} $ and we set $F= \{t,t^5\}$,  $G=  \{t^2, t^4\},$
    \item $A' =\{ \{1\}, \{t,t^5, t^3\}, \{t^2, t^4\}\}$,
    \item $A''= \{ \{1\}, \{t,t^5\}, \{t^2, t^4, t^3\}\}$,
    \item $B= \{ \{1\},  \{t, t^2, t^3, t^4, t^5\} \},$

    \item $C= \{ \{1\},  \{t, t^2, t^3, t^4\}, \{t^5\} \}$ and we set $H=\{t, t^2, t^3, t^4\}$,

    \item $D= \{ \{1\}, \{t, t^2\}, \{t^4, t^5\}, \{t^3\} \}.$
    \end{itemize}
Let $A$ be the partition of a connected Hopf grading. Then $k^F$ has an ergodic grading and as shown previously the characteristic of $k$ is not $2$. As in similar  situations let $f= \delta_t - \delta_{t^5}$ be the homogeneous element of degree $s$, with $s$ of order $2$. Similarly, let $g= \delta_{t^3} - \delta_{t^4}$ be homogeneous of degree $s'$ of order $2$ for $k^G$. The basis vectors of this grading of $k^{C_6}$ are mutually orthogonal with respect to the inner product defined in the Dirac masses. Hence we obtain basis vectors for the dual decomposition $Y$ of $kC_6$, for instance $Y^s=k(t-t^5)$ and $Y^{s'}=k(t^2-t^4)$, while $Y^1=k\oplus k(t+t^5)\oplus k(t^2+t^4)\oplus kt^3$. Simple computations show that if $s=s'$ this decomposition $Y$ is indeed a grading of the algebra $kC_6$; there is only one Hopf connected grading of $k^{C_6}$ with partition $A$, it has group  $C_2$.
\medskip

The methods used previously can be applied in order to show that in all other cases there are no connected Hopf gradings. The case of the partition $C$ is particular, we briefly sketch where it fails. Much as in the previous proof, if there exists a root of unity $i$ of order $4$ in $k$,  there is a homogeneous element of degree $s$ of order $4$. The matrix is the same as before. Most of the requirements in order that the dual decomposition is a grading are satisfied, however it finally fails being a grading.
\qed
\end{proof}

We state the result for $C_7$ without sketching the proof. The computations are longer than before but they follow the same lines.

\begin{pro}
$\Pi_1^{\mathsf H}(k^{C_7}) = C_6$ if the characteristic of $k$ is not $2$. There is no universal Hopf grading.
If the characteristic is $2$, then the fundamental group is trivial.
\end{pro}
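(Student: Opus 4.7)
The plan is to follow the now-standard case-by-case method used in the preceding propositions for $k^{C_n}$ with $n\le 6$. I would enumerate all $S$-stable partitions of $C_7$ having $\{1\}$ as a singleton block (forced by Proposition \ref{one}), and then for each partition decide whether a connected Hopf grading with that partition exists and what its grading group is.

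The non-trivial $S$-orbits in $C_7$ are the three pairs $\{t,t^6\}$, $\{t^2,t^5\}$, $\{t^3,t^4\}$, so the $S$-stable partitions split into two families: those whose blocks are each a union of $S$-orbits (yielding five candidates analogous to the partitions $A,A_{12},A_{13},A_{23},B$ of the $C_6$ proof), and those having a block $F$ with $S(F)\neq F$ paired to the block $S(F)$. The salient instance of the second family is $A'=\{\{1\},\{t,t^2,t^4\},\{t^3,t^5,t^6\}\}$ together with the three other partitions obtained from the remaining $S$-equivariant splittings of the three orbits into two three-element blocks. Partitions with additional singleton blocks are also on the list and will be discarded by the same kind of dual-decomposition argument used for partitions $C$ and $D$ in the $C_5,C_6$ cases.

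For each candidate $P$, I would carry out exactly the analysis of the earlier propositions. On each $S$-invariant block $F$ an ergodic grading of $k^F$ by a group of order $|F|$ is sought, subject to the constraint that the antipode preserves every one-dimensional homogeneous component $kh$; this gives explicit conditions on the coefficients of the normalised generator $h$. On an $S$-pair of blocks the antipode instead links the ergodic gradings on the two sides, so their data must match. One then computes the dual basis of $kC_7$ by the matrix method demonstrated in Proposition \ref{C3} and checks whether the associated decomposition $Y$ of $kC_7$ is a grading, i.e.\ that $Y^1$ is a subalgebra and $Y^sY^{s'}\subseteq Y^{ss'}$. Guided by the analogy with the $C_5,C_6$ cases, I expect in characteristic $\neq 2$ the following outcome: partition $A=\{\{1\},\{t,t^6\},\{t^2,t^5\},\{t^3,t^4\}\}$ supports a unique connected Hopf grading by $C_2$ (with all three normalised generators $\delta_{t^i}-\delta_{t^{-i}}$ of common degree $s$ of order $2$); the four $A'$-type partitions support connected Hopf gradings by $C_3$ coming from the order-$3$ Hopf automorphism $t\mapsto t^2$ of $k^{C_7}$ (and from a primitive cube root of unity in $k$); the coarsest partition $B$ does not support any single connected Hopf grading whose group is large enough to dominate both the $C_2$ and the $C_3$ grading, which is precisely the statement that no universal Hopf grading exists. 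All remaining partitions are discarded by the same techniques as in the previous propositions.

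The fundamental group is then the inverse limit of the diagram of connected Hopf gradings. Since the $C_2$-grading from $A$ and the $C_3$-grading from the $A'$-type partitions are incomparable under morphisms of Hopf gradings (any morphism of their grading groups would have to factor through both $C_2$ and $C_3$, which have coprime orders), the inverse limit reduces to $C_2\times C_3\cong C_6$ by the Chinese Remainder Theorem, yielding $\Pi_1^{\mathsf H}(k^{C_7})=C_6$. In characteristic $2$ the ergodic analysis on every two-element block fails as in previous cases, killing all non-trivial candidates and leaving only the trivial grading. The main obstacle, as the paper itself warns, will be the length of the case analysis for partition $B$: one must fully describe the normalised generator $h=\delta_t+\sum_{i=2}^{6}x_i\delta_{t^i}$, impose $S(kh)=kh^5$ to derive constraints such as $x_6=-1$, $x_5=-x_2$, $x_4=-x_3$, restrict $x_2,x_3$ to sixth roots of unity giving six distinct values among $\{1,x_2,x_3,-x_3,-x_2,-1\}$, and then invert the $6\times 6$ matrix expressing $\{\delta_H,h,\ldots,h^5\}$ in the Dirac basis in order to test whether $Y^1\subset kC_7$ is a subalgebra.
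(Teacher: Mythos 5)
The paper does not actually prove this proposition (it explicitly declines to sketch the argument), so your plan can only be judged on its own terms. The framework is certainly the right one --- enumerate the $S$-stable partitions of $C_7$ having $\{1\}$ as a block and test each by the dual-decomposition method --- but two of your predicted outcomes are wrong or unsupported, and one of them would contradict the very statement being proved. It is not true that all four partitions of type $A'$ (two three-element blocks swapped by $S$) support a $C_3$-grading. For $F=\{t,t^2,t^3\}$ the span of $1$, $t+t^2+t^3$ and $t^4+t^5+t^6$ is not a subalgebra of $kC_7$: squaring $t+t^2+t^3$ gives $t^2+2t^3+3t^4+2t^5+t^6$, whose coefficients are not constant on the blocks. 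The same failure occurs for every choice except the quadratic-residue block $\{t,t^2,t^4\}$, i.e.\ the orbit partition of the order-$3$ subgroup of $\mathsf{Aut}(C_7)$. This matters: the three non-residue partitions are permuted among themselves by the Hopf automorphisms of $k^{C_7}$ and are never carried to the residue partition, so if all four supported gradings the limit would contain more than one independent $C_3$ factor and could not be $C_6$.

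More seriously, the entire proposition hinges on partition $B=\{\{1\},\{t,\dots,t^6\}\}$, and there you only predict that nothing survives, with no argument. The constraints you list ($S$ preserving the homogeneous line, the coefficients being sixth roots of unity, the relations $x_6=-1$, $x_5=-x_2$, $x_4=-x_3$) are all satisfied by the discrete-logarithm element $h(t^a)=\zeta^{\log_3 a}$ for $\zeta$ a primitive sixth root of unity --- this is exactly the eigenvector of the order-$6$ Hopf automorphism $t\mapsto t^3$ of $k^{C_7}$, and it satisfies $S(h)=-h$, $h^6=\delta_H$, and has six distinct values. So partition $B$ cannot be discarded by the tests you name; settling it requires the full $6\times 6$ dual-basis computation, and its outcome is precisely what governs the assertion that no universal Hopf grading exists as well as your identification of the limit with $C_2\times C_3$. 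Until that case is genuinely resolved, the proof is incomplete at its central point. Two further slips: the antipode condition should be $S(kh)=kh$ (degree-preserving, as in the paper's $C_3$ and $C_5$ computations), not $S(kh)=kh^5$, even though the relations you wrote happen to be those of the case $S(h)=-h$; and the limit computation also requires checking that the surviving $C_2$- and $C_3$-gradings admit no non-identity self-morphisms, since a self-morphism inverting $C_3$ would force the $C_3$-coordinate of every compatible family to be trivial and would change the answer.
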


We recall that $k^{C_n}$ and $kC_n$ are isomorphic Hopf algebras if $k$ contains all $n$-th roots of unity, in which case of course the characteristic of $k$ does not divide $n$. Consequently, the situation for $kC_n$ is summarised by the table at the beginning of this section.

For small values of the cardinal of a set $E$, the fundamental group of the associative algebra $k^E$ has been computed in \cite{CRS ANT 10}, see also \cite{ginsch}.  Note that $k^E$ has a universal grading only for $| E |=2$. Nevertheless the morphism $\tau$ of Theorem \ref{themorphism} is surjective also for $| E |=3, 4$.

\footnotesize
\noindent C.C.:
\\Institut Montpelli\'{e}rain Alexander Grothendieck (IMAG), UMR 5149\\
Universit\'{e}  de Montpellier, F-34095 Montpellier cedex 5,
France.\\
{\tt Claude.Cibils@umontpellier.fr}

\medskip

\noindent A.S.:
\\IMAS-CONICET y Departamento de Matem\'atica,
 Facultad de Ciencias Exactas y Naturales,\\
 Universidad de Buenos Aires,
\\Ciudad Universitaria, Pabell\'on 1\\
1428, Buenos Aires, Argentina. \\{\tt asolotar@dm.uba.ar}

\end{document}